\definecolor{myred}{rgb}{0.75,0,0}
\definecolor{mygreen}{rgb}{0,0.5,0}
\definecolor{myblue}{rgb}{0,0,0.65}
\let\temp\emptyset
\let\emptyset\varnothing
\let\varnothing\temp 
\newcommand\numberthis{\addtocounter{equation}{1}\tag{\theequation}}
\theoremstyle{plain}
\newtheorem{theorem}{Theorem}[section]
\newtheorem{corollary}[theorem]{Corollary}
\newtheorem{lemma}[theorem]{Lemma}
\newtheorem*{lemma*}{Lemma}
\newtheorem*{proposition*}{Proposition}
\newtheorem{conjecture}[theorem]{Conjecture}
\newtheorem*{truefact*}{Fact}
\theoremstyle{definition}
\theoremstyle{remark}
\newtheorem*{remark}{Remark}
\newcommand{\bb}[1]{\expandafter\newcommand\expandafter{\csname #1\endcsname}{{\mathbb {#1}}}} 
\newcommand{\mcH}{\mathcal H}
\newcommand{\thus}{{\Rightarrow}}
\newcommand{\onto}{\twoheadrightarrow}
\renewcommand{\a}{\alpha}
\renewcommand{\b}{\beta}
\newcommand{\g}{\gamma}
\renewcommand{\d}{\delta}
\newcommand{\ep}{\varepsilon}
\renewcommand{\l}{\lambda}
\newcommand{\mrm}[1]{\expandafter\newcommand\expandafter{\csname #1\endcsname}{{\mathrm {#1}}}}
\newcommand{\p}{\mathfrak p}
\renewcommand{\S}{\mathfrak S}
\renewcommand{\mod}{\bmod}
\newcommand{\subsum}[1]{\sum_{\substack{#1}}}
\newcommand{\stirlingii}{\genfrac{\{}{\}}{0pt}{}}
\title[Sums of large singular series]{Sums of singular series with large sets and the tail of the distribution of primes}
\author{Vivian Kuperberg}
\thanks{The author is supported by NSF GRFP grant DGE-1656518 as well as the NSF Mathematical Sciences Research Program through the grant DMS-2202128, and would like to thank Kannan Soundararajan for many helpful comments and discussions, as well as the anonymous referee for many helpful comments.}
\begin{document}
\begin{abstract}
In 1976, Gallagher showed that the Hardy--Littlewood conjectures on prime $k$-tuples imply that the distribution of primes in log-size intervals is Poissonian. He did so by computing average values of the singular series constants over different sets of a fixed size $k$ contained in an interval $[1,h]$ as $h \to \infty$, and then using this average to compute moments of the distribution of primes. In this paper, we study averages where $k$ is relatively large with respect to $h$. We then apply these averages to the tail of the distribution. For example, we show, assuming appropriate Hardy--Littlewood conjectures and in certain ranges of the parameters, the number of intervals $[n,n +\l \log x]$ with $n\le x$ containing at least $k$ primes is $\ll x\exp(-k/(\l e)).$
\end{abstract}
\maketitle

\section{Introduction}

The Hardy--Littlewood prime $k$-tuple conjectures state that if $\mathcal H = \{h_1, \dots, h_k\}$ is a set of $k$ distinct integers, then as $x \to \infty$,
\begin{equation}
\sum_{n \le x} \prod_{i=1}^k \Lambda(n+h_i) = (\mathfrak S(\mathcal H) + o(1))x,
\end{equation}
where $\mathfrak S(\mathcal H)$ is the singular series
\begin{equation}\label{eq:bg:singseries}
\mathfrak S(\mcH) = \prod_{p \text{ prime}} \frac{1-\nu_{\mcH}(p)/p}{(1-1/p)^k},
\end{equation}
and $\nu_{\mcH}(p)$ denotes the number of distinct residue classes modulo $p$ occupied by the elements of $\mcH$. 

Many aspects of the distribution of primes can be understood through the lens of the Hardy--Littlewood conjectures. For example, in \cite{gallaghershortintervals}, Gallagher showed that the Hardy--Littlewood conjectures imply that the distribution of primes in log-size intervals is Poissonian. He did so by showing that for fixed $k$ and as $h \to \infty$,
\begin{equation}\label{eq:bigksums:gallagheraverage}
\subsum{h_1, \dots, h_k \le h \\ \text{distinct}} \mathfrak S(h_1, \dots, h_k) \sim \subsum{h_1, \dots, h_k \le h \\ \text{distinct}} 1,
\end{equation}
so that singular series for sets of size $k$ have value $1$ on average as their elements grow large. In \cite{MontgomerySoundararajanPrimesIntervals}, Montgomery and Soundararajan computed second-order terms of this average in order to show that, assuming a version of the Hardy--Littlewood conjectures with a stronger error term, primes in somewhat longer intervals obey an appropriate Gaussian distribution. For both of these analyses, $k$ is fixed throughout.

What about when $k$ is not fixed? Here we study sums of singular series for sets of size $k$ with elements in $[1,h]$, where $k\to \infty$ as $h \to \infty$. Put another way, we study the rate at which the average value of $\mathfrak S(\mcH)$ converges to $1$, in order to extend Gallagher's proof to larger $k$. 
\begin{theorem}\label{thm:bigksums:extendinggallagher}
Fix $\d > \frac 12$, and let $h, k \in \N$ with $k = O((\log h)^{1-\d})$. Let $T_k(h)$ be given by
\begin{equation}\label{eq:bigksums:defofTkh}
T_k(h) := \sum_{\substack{h_1, \dots, h_k \le h \\ \text{distinct}}} \mathfrak S(h_1, \dots, h_k),
\end{equation}
Then there exists a $\beta > 0$, dependent only on $\d > \frac 12$, with
\begin{equation*}
T_k(h) = h^k + O(h^{k-\beta}).
\end{equation*}
\end{theorem}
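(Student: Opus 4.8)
The plan is to compute $T_k(h)$ via the standard expansion of the singular series $\mathfrak{S}(\mathcal H)$ as a sum over divisors of differences, extracting the main term $h^k$ and showing the remainder is a power of $h$ below $h^k$. First I would write, for a set $\mathcal H = \{h_1,\dots,h_k\}$,
\[
\mathfrak S(\mathcal H) = \sum_{q=1}^\infty \frac{\mu(q)}{\phi(q)^k}\, c_q(\mathcal H),
\]
where $c_q(\mathcal H)$ is the Ramanujan-type sum counting the arithmetic structure of $\mathcal H$ modulo $q$ (this is the Fourier expansion used by Montgomery--Soundararajan). Plugging this in and swapping the order of summation, $T_k(h) = \sum_q \frac{\mu(q)}{\phi(q)^k}\sum_{\mathcal H} c_q(\mathcal H)$, where the inner sum is over ordered $k$-tuples of distinct integers in $[1,h]$. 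The $q=1$ term contributes exactly the count of such tuples, which is $h(h-1)\cdots(h-k+1) = h^k + O(k^2 h^{k-1})$; since $k = O((\log h)^{1-\delta})$, the error $k^2 h^{k-1}$ is absorbed into $O(h^{k-\beta})$ for any $\beta < 1$.

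**The main estimate.** The heart of the proof is bounding $\sum_{q \ge 2} \frac{|\mu(q)|}{\phi(q)^k} \big| \sum_{\mathcal H} c_q(\mathcal H)\big|$. For squarefree $q$, the inner sum over tuples factors over primes $p \mid q$, and the contribution of each prime $p$ to the tuple-count gives a factor roughly of size $p^{k-1} (h/p)^k \cdot (\text{main}) + \text{lower order}$; the key cancellation is that $\sum_{h_1,\dots,h_k \le h} c_p(h_1,\dots,h_k)$ is much smaller than the trivial bound $h^k$ when $p$ is not too large, because $c_p$ has mean zero in each variable. The expected bound is something like $\big|\sum_{\mathcal H} c_q(\mathcal H)\big| \ll h^{k} \prod_{p\mid q}\left(\frac{C p}{h} + \frac{C}{p}\right)$ or a similar hybrid, so that summing against $\phi(q)^{-k}$ converges and produces total size $\ll h^{k-\beta}$ once one optimizes the cutoff between "small $q$" (where $h/p$ is the savings) and "large $q$" (where $\phi(q)^{-k} \le 2^{-\omega(q)(k-1)}$ is already tiny because $k \to \infty$). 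I would split at $q \le h^{\eta}$ and $q > h^\eta$ for a suitable small $\eta = \eta(\delta)$; on the large range the factor $\phi(q)^{-k} \le \phi(q)^{-1} q^{-(k-1)} \le q^{-1} h^{-\eta(k-1)}$ and since $k \ge 2$ (WLOG, the case of bounded $k$ being classical) this beats $h^k$ with room to spare, as $\eta(k-1)\log h$ grows. On the small range one uses the genuine cancellation in $c_q$.

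**The main obstacle.** The delicate point is making the cancellation in $\sum_{\mathcal H} c_q(\mathcal H)$ over the small-$q$ range uniform in $k$ as $k$ grows with $h$: naively one loses a factor that is exponential in $k$ (from the number of "diagonal" configurations where several $h_i$ coincide modulo $p$), and one must check this is dominated by the gain $(h/p)^{-k}$-type savings. This is exactly where the hypothesis $k = O((\log h)^{1-\delta})$ enters — it ensures that quantities like $k^{\omega(q)}$ or $(\log q)^k$, arising when expanding $c_q(\mathcal H) = \prod_{p\mid q} c_p(\mathcal H)$ and counting residue patterns, stay under control: with $q \le h^\eta$ one has $\omega(q) \ll \log h / \log\log h$, and $k^{\omega(q)} \le \exp((1-\delta)\log\log h \cdot \log h/\log\log h) = h^{1-\delta}$, comfortably below any fixed power $h^\beta$ with $\beta > 1-\delta$. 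So I expect $\beta$ can be taken to be anything less than $\delta$ (or perhaps $\min(\delta, \text{something})$), determined by balancing the exponent $1-\delta$ against the power-of-$h$ savings from the $q=1$ error and the non-principal terms. The routine-but-lengthy parts are: (i) establishing the Ramanujan-sum expansion and its multiplicativity, (ii) the exact evaluation of $\sum_{h \le h} c_p(\cdot)$ including the diagonal corrections, and (iii) the final bookkeeping of the two ranges; none of these should present a conceptual difficulty once the parameter balancing above is fixed.
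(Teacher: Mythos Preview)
Your high-level strategy---expand $\mathfrak S(\mathcal H)$ multiplicatively as a sum over squarefree $q$, extract the main term from $q=1$, and bound the tail---is the same as the paper's (which follows Gallagher). However, your proposed execution of the tail bound has a genuine gap.

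The problem is your large-$q$ argument. You assert that for $q > h^\eta$ the factor $\phi(q)^{-k}$ alone provides enough decay, writing $\phi(q)^{-k} \le \phi(q)^{-1} q^{-(k-1)}$. First, this inequality points the wrong way (since $\phi(q) \le q$). More importantly, you cannot bound by $\phi(q)^{-k}$ in isolation: in your expansion the term attached to $q$ is $\frac{|c_q(\mathcal H)|}{\phi(q)^k}$, and this ratio is exactly the paper's $|a_{\mathcal H}(q)| = \prod_{p\mid q}|a(p,\nu_{\mathcal H}(p))|$. For a small prime $p\le k$ one has $|a(p,\nu)| \asymp e^{k/p}$ (for instance $|a(2,1)| = 2^{k-1}-1$), so the ``decay'' from $\phi(q)^{-k}$ is entirely cancelled by $|c_q|$. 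Any squarefree $q$ divisible by $2$ already carries a factor of size $2^{k}$, so summing $\phi(q)^{-k}$ over $q>h^\eta$ tells you nothing. This is precisely why the paper does \emph{not} split at a threshold in $q$, but instead separates the primes $p\le k$ from $p>k$: the small primes contribute a bounded factor $\prod_{p\le k}(1+|a(p,\nu)|)\ll e^{O(k\log\log k)} = h^{o(1)}$ once $k\ll(\log h)^{1-\delta}$, and only then is the remaining sum over $q$ with all prime factors $>k$ handled by Rankin's trick with an exponent $\alpha$ chosen so that $(1+\alpha)(1-\delta)<1$.

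Your small-$q$ discussion is also too vague. The actual cancellation mechanism is a specific identity: after counting tuples via the Chinese Remainder Theorem, the leading term is $\big(\tfrac{h}{q}\big)^k \prod_{p\mid q}\sum_{\nu} a(p,\nu)\binom{p}{\nu}\sigma(k,\nu)$, and Gallagher's observation is that this inner sum vanishes for every prime $p$, so only $q=1$ survives in the main term. Your hybrid bound $\prod_{p\mid q}(Cp/h + C/p)$ and the $k^{\omega(q)}$ heuristic do not capture this; without the exact vanishing you would accumulate losses from each $q\le h^\eta$ rather than a genuine power saving. Your estimate $k^{\omega(q)}\le h^{1-\delta}$ also omits the factor $\eta$ (it should be $h^{\eta(1-\delta)+o(1)}$), though this is minor.
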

In particular, Theorem \ref{thm:bigksums:extendinggallagher} states that \eqref{eq:bigksums:gallagheraverage} holds whenever $k = O((\log h)^{1-\d})$ for some $\d > \frac 12$. One might expect the average value of $1$ to extend to still larger $k$; for example, it is reasonable to conjecture that \eqref{eq:bigksums:gallagheraverage} would hold whenever $k = O((\log h)^2)$. For arbitrarily large $k$, Theorem \ref{thm:bigk:sumboundallk} provides a bound on the average value of $k$-term singular series over sets with elements in $[1,h]$. 
\begin{theorem}\label{thm:bigk:sumboundallk}
Let $k, h \in \N$, with no conditions on their relative growth rates. Define $T_k(h)$ by \eqref{eq:bigksums:defofTkh}.
Then 
\begin{equation}\label{eq:bigk:sumboundallk}
T_k(h) \ll h^k \prod_{p \le k^3} \frac 1{(1-1/p)^k} \ll h^k (3\log k)^k.
\end{equation}
\end{theorem}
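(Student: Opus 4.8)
The plan is to strip off the primes $p\le k^3$, where a crude bound on the singular series reproduces exactly the product in the statement, and then to show that the remaining factor contributes only $O(1)$ on average. Write $\mathfrak S(\mcH)=\mathfrak S^-(\mcH)\,\mathfrak S^+(\mcH)$, where $\mathfrak S^-(\mcH)=\prod_{p\le k^3}\frac{1-\nu_\mcH(p)/p}{(1-1/p)^k}$ and $\mathfrak S^+(\mcH)=\prod_{p> k^3}\frac{1-\nu_\mcH(p)/p}{(1-1/p)^k}$. Since $0\le 1-\nu_\mcH(p)/p\le 1$, we get $\mathfrak S^-(\mcH)\le\prod_{p\le k^3}(1-1/p)^{-k}$ pointwise, so
\[
T_k(h)\ \le\ \Big(\prod_{p\le k^3}\frac{1}{(1-1/p)^k}\Big)\sum_{\substack{h_1,\dots,h_k\le h\\ \text{distinct}}}\mathfrak S^+(\mcH).
\]
Granting that the inner sum is $\ll h^k$ (the main step), the first inequality of the theorem follows, and the second is Mertens' estimate for $\prod_{p\le k^3}(1-1/p)^{-1}$. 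We may assume $k\ge 2$ and $h\ge k$, the other cases being trivial.

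For the main step, put $c_p(\mcH)=\frac{1-\nu_\mcH(p)/p}{(1-1/p)^k}-1$, so $\mathfrak S^+(\mcH)=\prod_{p>k^3}(1+c_p(\mcH))$ with every factor positive (as $\nu_\mcH(p)<p$); hence $\mathfrak S^+(\mcH)\le\prod_{p>k^3}(1+|c_p(\mcH)|)=\sum_q|c_q(\mcH)|$, where $q$ ranges over squarefree integers with every prime factor exceeding $k^3$ and $c_q(\mcH)=\prod_{p\mid q}c_p(\mcH)$. This converges absolutely: when $p>h$ the $h_i$ are distinct modulo $p$, so $c_p(\mcH)$ equals the constant $a_p:=\frac{1-k/p}{(1-1/p)^k}-1$, and $a_p=O(k^2/p^2)$ uniformly for $p>k^3$. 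Interchanging the order of summation, everything comes down to the estimate
\[
\sum_{\substack{h_1,\dots,h_k\le h\\ \text{distinct}}}|c_q(\mcH)|\ \ll\ (Ck^2)^{\omega(q)}\,\frac{h^k}{q^2}
\]
for an absolute constant $C$, where $\omega(q)$ is the number of prime factors of $q$. To prove it, write $q=q'q''$ with $q''$ the product of those $p\mid q$ with $p>h$; these primes contribute only the constant $\prod_{p\mid q''}|a_p|\ll(Ck^2)^{\omega(q'')}/q''^2$. For $k^3<p\le h$ one has $c_p(\mcH)-a_p=(1-1/p)^{-k}(k-\nu_\mcH(p))/p$, so $|c_p(\mcH)|\le|a_p|+O\big((k-\nu_\mcH(p))/p\big)$; after multiplying out over $p\mid q'$ it suffices to bound $\sum_\mcH\prod_{p\in S}(k-\nu_\mcH(p))$ for subsets $S$ of the prime factors of $q'$, the factor $|a_p|\ll k^2/p^2$ handling the primes outside $S$. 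Since $k-\nu_\mcH(p)\le\#\{i<j:p\mid h_i-h_j\}$, expanding the product rewrites this sum as at most $\binom{k}{2}^{|S|}$ counts of distinct tuples $h_1,\dots,h_k\le h$ obeying one congruence $p\mid h_{i_p}-h_{j_p}$ for each $p\in S$; an elementary argument bounds each such count by $2^{|S|}h^k/\prod_{p\in S}p$, and assembling the pieces gives the displayed estimate.

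Summing over $q$ then yields
\[
\sum_{\substack{h_1,\dots,h_k\le h\\ \text{distinct}}}\mathfrak S^+(\mcH)\ \ll\ h^k\sum_q\frac{(Ck^2)^{\omega(q)}}{q^2}\ =\ h^k\prod_{p>k^3}\Big(1+\frac{Ck^2}{p^2}\Big)\ \le\ h^k\exp\Big(Ck^2\sum_{p>k^3}\frac{1}{p^2}\Big)\ \ll\ h^k,
\]
because $\sum_{p>k^3}p^{-2}\ll k^{-3}$ makes the exponent $\ll k^{-1}$. Together with the first paragraph and Mertens' theorem this proves the theorem. The step requiring genuine care — and the main obstacle — is the uniform count $\#\{h_1,\dots,h_k\le h\text{ distinct}:p\mid h_{i_p}-h_{j_p}\text{ for all }p\in S\}\ll 2^{|S|}h^k/\prod_{p\in S}p$: the pairs $\{i_p,j_p\}$ selected for different primes of $S$ may share indices, and $\prod_{p\in S}p$ may exceed $h$, so one must check this bound in every configuration with an implied constant independent of $k$ and $h$. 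Notably, no cancellation among the $c_p$ is needed, in contrast with the proof of Theorem \ref{thm:bigksums:extendinggallagher}.
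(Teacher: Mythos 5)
Your proposal takes a genuinely different route from the paper. The paper proves a pointwise bound on $\mathfrak S(\mathcal H)$ (Lemma~\ref{lem:amgmbound}) by passing to the exponential, peeling the sum over primes $p\mid\Delta(\mathcal H)$ apart into a sum over index pairs $(i,j)$, and then applying Jensen's inequality to pull the resulting average of exponentials outside; after summing over $\mathcal H$ each term then depends only on a \emph{single} difference $\ell=h_i-h_j$, and the final estimate reduces to bounding $\sum_{\ell\le h}f_k(\ell)$ for a multiplicative function $f_k$ supported on primes $>k^3$. This is a one-variable sum, so no multi-dimensional lattice-point counting is needed. You instead expand $\prod_{p>k^3}(1+c_p(\mathcal H))$ into $\sum_q c_q(\mathcal H)$ over squarefree $q$ (as Gallagher and the paper do for Theorem~\ref{thm:bigksums:extendinggallagher}, but keeping absolute values since no cancellation is wanted), and then try to estimate $\sum_{\mathcal H}|c_q(\mathcal H)|$ term by term. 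The upsides of your route are that the pointwise-to-average decoupling via Jensen is avoided, and the resulting bound would be a genuine $L^1$ statement with $\prod_{p>k^3}(1+O(k^2/p^2))=1+O(1/k)$ — formally tighter. The downside is that the estimate you reduce to is a multi-congruence counting problem with sets of congruences that overlap in index, rather than a single-difference sum.

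That reduction is where your argument has a gap, and you flag it yourself. You need
\[
\#\bigl\{h_1,\dots,h_k\le h\ \text{distinct}\ :\ p\mid h_{i_p}-h_{j_p}\ \text{for all}\ p\in S\bigr\}\ \ll\ 2^{|S|}\,\frac{h^k}{\prod_{p\in S}p}
\]
with an implied constant independent of $k$, $h$, $|S|$, and the choice of (possibly repeating, possibly overlapping) pairs $\{i_p,j_p\}$. The natural argument — order the indices, so that each $h_v$ is constrained modulo $q_v=\prod\{p\in S:\ v\ \text{is the later endpoint of}\ p\}$, giving at most $\prod_v(h/q_v+1)$ solutions — only yields the target when every $q_v\le h$. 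But as soon as the multigraph of pairs contains a cycle, some $q_v$ is forced to be a product of at least two primes of $S$ regardless of the order, and when that product exceeds $h$ the factor $h/q_v+1$ is $\Theta(1)$ rather than $\Theta(h/q_v)$, and the resulting bound $\prod_v(h/q_v+1)$ is larger than $2^{|S|}h^k/\prod_{p\in S}p$. (For the triangle $\{1,2\},\{2,3\},\{1,3\}$ with $p_2p_3\gg h$, the ordered count gives $\approx h^2/p_1$, whereas the target is $\approx h^3/(p_1p_2p_3)\ll h^2/p_1$.) So the bound you need is \emph{not} a consequence of the spanning-tree/CRT argument you sketch; if it holds — and a closer look at specific configurations suggests it may — the proof must exploit cancellation in the equidistribution of the residues as the earlier coordinates vary, which is a lattice-point-counting estimate with nontrivial successive-minima input. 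As written, this is a missing lemma, not merely a step ``requiring genuine care.'' The paper's Jensen trick sidesteps precisely this difficulty by ensuring that, after summing over $\mathcal H$, only the single linear form $h_i-h_j$ ever appears, so no correlated congruence system has to be counted.

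One small additional remark: even granting the count, assembling the pieces requires $k\le h$ so that $(1+1/h)^k=O(1)$ when collecting the $(h/q_v+1)$ factors over the vertices with $q_v=1$; you do assume $h\ge k$ near the start, but it is worth making explicit that this enters here, since the theorem statement imposes no relation between $h$ and $k$ and the case $h<k$ must be handled trivially (the sum is empty).
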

This upper bound is likely much weaker than the truth, but it has the advantage of bounding the average value only in terms of $k$. Instead of taking $p \le k^3$, in our proof we can take $p \le k^{2 +\ep}$ for any $\ep > 0$, which has the effect of replacing the $3^k$ in the final bound with a $(2+\ep)^k$. However, the final bound is in any case $e^{O(k \log\log k)}$. Theorems \ref{thm:bigksums:extendinggallagher} and \ref{thm:bigk:sumboundallk} are proven in Section \ref{sec:bigksums:gallagher}.

In the second half of this paper, we discuss one application of sums of singular series for sets of size $k$ when $k$ varies: namely, the tail of the distribution of primes. The \emph{maximum} number of primes in an interval of size $\l \log x$ is closely connected to the study of small gaps between primes, and has been studied in, among other places, \cite{GranvilleLumley}, \cite{MR3272929}, and \cite{MR3171761}. In \cite{GranvilleLumley}, Granville and Lumley conjecture that if $y \le \log x$ and $x,y \to \infty$, the lim sup of the number of primes in intervals $(x, x+y]$ is $\sim \frac{y}{\log y}$, and if $\log x \le y = o((\log x)^2)$, the lim sup should be given by $\frac{\log x}{\log\left(\frac{(\log x)^2}{y}\right)}$. They also formulate conjectures for larger intervals.

The \emph{expected} number of primes in such an interval is much smaller; for a constant $\l$, there are on average $\l$ primes in an interval $(x,x + \l \log x]$, and Gallagher \cite{gallaghershortintervals} showed that for fixed $\l > 0$, assuming the Hardy--Littlewood conjectures,
\begin{equation*}
\lim_{x \to \infty} \frac 1x \#\{n \le x : \pi(n+\l \log x) - \pi(n) = k\} = \frac{\l^k e^{-\l}}{k!}. 
\end{equation*}

Again, we consider the situation where $k \to \infty$. What bounds can be proven on the tail of this distribution away from the extreme values? For example, one can ask how frequently intervals of size $\l \log x$ contain at least $\log \log x$ primes, where the Poisson prediction is that
\begin{equation}\label{eq:poissonpredictionforloglogprimes}
\frac 1x \#\{n \le x: \pi(n + \l \log x) - \pi(n) \ge \log \log x \} \approx \frac{(e\l)^{\log \log x} e^{-\l}}{(\log \log x)^{\log \log x}}.
\end{equation}
Since $k=|\mathcal H|$ grows with $x$ in our setting, our results rely on a version of the Hardy--Littlewood conjectures which admits uniformity in the size of the set $\mathcal H$; we state this version here.
\begin{conjecture}[Hardy--Littlewood $k$-tuples conjecture, uniform version]\label{conj:bg:HLuniform}
There exist two absolute constants $\epsilon > 0$ and $C > 0$ such that for all $x$, for all $k \le (\log \log x)^3$, and for all admissible tuples $\mathcal H = \{h_1,\dots, h_k\} \subset [0, (\log x)^2]$,
\begin{equation}
\left|\sum_{n \le x} \mathbf 1_{\mathcal P}(n+h_1) \cdots \mathbf 1_{\mathcal P}(n+h_k) - \mathfrak S(\mcH) \mathrm{li}_k(x)\right| \le Cx^{1-\ep}.
\end{equation}

Equivalently, for possibly different values of $\ep$ and $C$,
\begin{equation}
\left|\sum_{n \le x} \Lambda(n+h_1) \cdots \Lambda(n+h_k) - \mathfrak S(\mathcal H)x\right| \le Cx^{1-\ep}.
\end{equation}
\end{conjecture}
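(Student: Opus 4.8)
I should note at the outset that Conjecture \ref{conj:bg:HLuniform} is not within reach of present methods: a single instance, with $k=2$ and $\mathcal H=\{0,2\}$, already encodes a quantitative twin prime theorem. What I can offer is the shape of the program one would run, together with the precise point at which it breaks down.

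The plan is to execute the Hardy--Littlewood circle method while tracking every dependence on $k$ and on $\mathcal H$. Set $S(\alpha)=\sum_{n\le x}\Lambda(n)e(n\alpha)$, the relevant exponential sum. By the circle method one decomposes $\sum_{n\le x}\prod_{i=1}^{k}\Lambda(n+h_i)$ into a major-arc contribution, over a union $\mathfrak M$ of short intervals around fractions $a/q$ with $q\le Q$, plus a minor-arc remainder over the complementary set $\mathfrak m$, with the $k$-fold correlation expressed through products of $k$ copies of $S$ twisted by the shifts $h_i$ against a geometric-sum kernel of length $x$; the threshold $Q$ is taken to be a fixed power of $\log x$.

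On $\mathfrak M$ one substitutes the standard approximation to $S$ near $a/q$ furnished by the prime number theorem in arithmetic progressions, with a Siegel--Walfisz error term, multiplies out the $k$ twisted factors, and sums over $a$ and $q$. The local densities at the primes $p\mid q$ assemble into exactly the Euler factors $\frac{1-\nu_{\mathcal H}(p)/p}{(1-1/p)^{k}}$ of \eqref{eq:bg:singseries}, and completing the $q$-sum to all $q$ costs only the tail $\prod_{p>Q}\frac{1-\nu_{\mathcal H}(p)/p}{(1-1/p)^{k}}=1+O(k^{2}/Q)$ together with a crude pointwise bound $\mathfrak S(\mathcal H)\ll(\log\log x)^{O(k)}$, which holds by Mertens' theorem since $\nu_{\mathcal H}(p)\ge 1$ for all $p$ and $\mathcal H\subset[0,(\log x)^{2}]$. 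Making this uniform over $k\le(\log\log x)^{3}$ and over $\mathcal H\subset[0,(\log x)^{2}]$ then comes down to monitoring how the Siegel--Walfisz error compounds across the $k$ factors and how large $Q$ must be taken as a function of $k$; because the diameter of $\mathcal H$ is at most $(\log x)^{2}=x^{o(1)}$ and $k$ is so small, this bookkeeping is harmless and leaves the power-saving error $x^{1-\epsilon}$ intact. Passing between the $\mathbf 1_{\mathcal P}$-form with $\mathrm{li}_k(x)$ and the $\Lambda$-form with $x$ is then routine partial summation, the prime powers contributing only $O(x^{1/2+o(1)})$.

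The obstacle, and the reason this must be assumed rather than proved, is the minor arcs. One needs $\int_{\mathfrak m}\bigl|\prod_{i=1}^{k}S(\alpha)e(h_i\alpha)\bigr|\,\mathrm{d}\alpha\ll x^{1-\epsilon}$, but the only bounds available for $S$ on $\mathfrak m$ come from Vaughan's identity or Vinogradov's method and take the form $|S(\alpha)|\ll x\bigl(q^{-1/2}+x^{-1/5}+(q/x)^{1/2}\bigr)(\log x)^{4}$ on an arc near $a/q$; raising this to the $k$-th power and integrating produces no saving over $x$, already when $k=2$. No minor-arc estimate strong enough to extract the main term is known for any fixed $k\ge 2$, let alone one uniform in $k$, which is why Conjecture \ref{conj:bg:HLuniform} is imposed as a hypothesis. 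In the regime of bounded $k$ it is precisely the classical Hardy--Littlewood conjecture equipped with a power-saving error term, and the uniformity required here is exactly what the standard probabilistic models for the primes predict, the shifts lying in a window of width $x^{o(1)}$ and $k$ never exceeding $(\log\log x)^{3}$.
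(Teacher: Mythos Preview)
The statement is a \emph{conjecture}, not a theorem, and the paper offers no proof of it; it is stated purely as a hypothesis on which the later results (Theorem \ref{thm:distributiontail:momentboundsmallk} and its corollaries) depend. Your write-up correctly recognises this from the first sentence and then sketches why the circle method fails on the minor arcs, which is an accurate account of the obstruction. There is nothing to compare against, since the paper does not attempt a proof; your discussion is appropriate as heuristic motivation but is not, and cannot be, a proof.
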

Here $\mathrm{li}_k(x)$ is the $k$-th logarithmic integral, given by
\[\mathrm{li}_k(x) := \int_2^x \frac{\mathrm{d}y}{(\log y)^k}.\]

When $k = 10$, this conjecture would suggest that for $x=5500$, the error term above is bounded by $Cx^{1-\ep}$ for some $\ep$ and some $C$. For several sets of size $10$, computer tests found that bounds of $(\log x)^6x^{1/2}$ held for all $x \le 5500$; in fact this and other tests for small values of $k$ suggest that the error term is far smaller, and for example may be bounded by $Cx^{1/2}(\log x)^k$ in this range of $k$ and $h$. It is also likely possible to extend the range of $k$ and $h$ in this conjecture; if $k$ is as large as $\log x$, then $\mathrm{li}_k(x)$ is small enough that the conjecture is not so meaningful, but it is difficult to say when Hardy--Littlewood convergencee should break down.

Gallagher's proof in \cite{gallaghershortintervals} that the distribution of primes in log-size intervals is (conditionally) Poissonian proceeds by computing moments. One strategy towards understanding the tail of the distribution is to estimate higher moments of the distribution, or equivalently, to understand how quickly the $r$th moment of the distribution of primes converges to the $r$th moment of a Poisson distribution. Using Conjecture \ref{conj:bg:HLuniform} as well as Theorems \ref{thm:bigksums:extendinggallagher} and \ref{thm:bigk:sumboundallk}, we can prove the following result on moments of the distribution of primes.
\begin{theorem}\label{thm:distributiontail:momentboundsmallk}
Assume Conjecture \ref{conj:bg:HLuniform}. Let $x > 0$, and assume that $h = \l \log x$ and that $r \ll (\log h)^{1-\d}$ for some $\d > \frac 12$. Define the $r$th moment $m_r(x,h)$ of the distribution of primes in intervals of size $h$ by
\begin{equation}\label{eq:rmomentlogdef}
m_r(x,h) = \frac 1x \sum_{n \le x} \left(\pi(n + h) - \pi(n) \right)^r.
\end{equation}
Then
\begin{equation*}
m_r(x,h) = \left(\sum_{\ell = 1}^r \stirlingii{r}{\ell} \l^\ell\right)(1 + o(1)),
\end{equation*}
where $\stirlingii{r}{\ell}$ denotes the Stirling numbers of the second kind.
\end{theorem}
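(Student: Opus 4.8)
The plan is to expand the $r$th power in \eqref{eq:rmomentlogdef} combinatorially, reducing everything to counting primes in $k$-tuples for $k \le r$, and then to feed in Conjecture~\ref{conj:bg:HLuniform} together with the singular series averages from Theorems \ref{thm:bigksums:extendinggallagher} and \ref{thm:bigk:sumboundallk}. First I would write $\pi(n+h) - \pi(n) = \sum_{1 \le a \le h} \mathbf 1_{\mathcal P}(n+a)$ (taking $h = \l\log x$ an integer, or absorbing the fractional part into the error), so that
\begin{equation*}
\bigl(\pi(n+h)-\pi(n)\bigr)^r = \sum_{a_1, \dots, a_r \le h} \prod_{i=1}^r \mathbf 1_{\mathcal P}(n+a_i).
\end{equation*}
Grouping the $r$-tuples $(a_1,\dots,a_r)$ by the set partition they induce — i.e.\ by which coordinates are equal — the tuples whose distinct values form a set of size $\ell$ contribute, after summing over $n \le x$, a term $\stirlingii{r}{\ell}$-many ways times $\sum_{\mathcal H} \sum_{n\le x}\prod_{h_j \in \mathcal H}\mathbf 1_{\mathcal P}(n+h_j)$, where $\mathcal H$ ranges over $\ell$-element subsets of $[1,h]$. (Here the bound $r \ll (\log h)^{1-\d}$ guarantees $\ell \le r$ stays within the admissible range $k \le (\log\log x)^3$ of the conjecture, since $h = \l\log x$.)

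Next I would apply Conjecture~\ref{conj:bg:HLuniform} to each inner sum: for admissible $\mathcal H$ of size $\ell$, $\sum_{n \le x}\prod \mathbf 1_{\mathcal P}(n+h_j) = \mathfrak S(\mathcal H)\,\mathrm{li}_\ell(x) + O(x^{1-\ep})$, while non-admissible $\mathcal H$ contribute $O(1)$ because some prime $p \le \ell$ divides one of the products identically — and there are at most $O(h^{\ell})$ sets, with $h^{\ell} x^{1-\ep}$ and $h^\ell$ both negligible against $x$ once $r$, hence $\ell$, is $\ll (\log h)^{1-\d} = o(\log x/\log\log x)$. Dividing by $x$, the $\ell$-part of $m_r(x,h)$ becomes
\begin{equation*}
\stirlingii{r}{\ell} \cdot \frac{\mathrm{li}_\ell(x)}{x} \sum_{\substack{h_1,\dots,h_\ell \le h\\ \text{distinct}}} \mathfrak S(h_1,\dots,h_\ell) \cdot \frac{1}{\ell!} + (\text{error}) = \stirlingii{r}{\ell} \cdot \frac{\mathrm{li}_\ell(x)}{x\,\ell!}\, T_\ell(h) + (\text{error}).
\end{equation*}
Now $\mathrm{li}_\ell(x) \sim x/(\log x)^\ell$ (with enough uniformity in $\ell$ in this range), and $(\log x)^\ell = (h/\l)^\ell$, so $\mathrm{li}_\ell(x)/(x\,\ell!) \sim \l^\ell/(h^\ell \ell!)$; combined with $T_\ell(h) = h^\ell(1 + O(h^{-\beta}))$ from Theorem~\ref{thm:bigksums:extendinggallagher}, this $\ell$-part is $\stirlingii{r}{\ell}\l^\ell(1+o(1))$. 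Summing over $\ell = 1, \dots, r$ gives the claimed main term, and I should double-check that the accumulated error, being a sum of $r$ terms each $o(1)$ relative to the corresponding main term (and the main terms are all positive), is genuinely $o\bigl(\sum_\ell \stirlingii{r}{\ell}\l^\ell\bigr)$.

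The main obstacle is controlling uniformity in $\ell$ across several asymptotics simultaneously: the prime-counting estimate $\mathrm{li}_\ell(x) = x/(\log x)^\ell \cdot (1 + o(1))$ must hold uniformly for $\ell$ up to $r \ll (\log h)^{1-\d} = (\log\log x)^{1-\d}$, which requires a careful expansion of $\mathrm{li}_\ell(x)$ (integration by parts $\ell$ times, with the tail terms of relative size $\ell/\log x$ still negligible); the error term in Conjecture~\ref{conj:bg:HLuniform}, multiplied by the number $\binom{h}{\ell} \le h^\ell$ of sets, must beat $x$, which is where the hypothesis $h = \l\log x$ (rather than $h$ a power of $x$) is essential; and the $O(h^{-\beta})$ from Theorem~\ref{thm:bigksums:extendinggallagher}, applied with $k = \ell \le r$, is uniform in $\ell$ because $\beta$ depends only on $\d$. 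Assembling these into a single clean $o(1)$, and handling the rounding of $h = \l\log x$ to an integer, is the bulk of the work; the combinatorial identity $\sum_\ell \stirlingii{r}{\ell}\l^\ell$ for the $r$th moment of a Poisson$(\l)$ variable is then immediate and matches Gallagher's fixed-$r$ result.
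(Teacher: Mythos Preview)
Your proposal is correct and follows essentially the same route as the paper: expand the moment via indicator functions, group by Stirling numbers into sums over distinct $\ell$-tuples, apply Conjecture~\ref{conj:bg:HLuniform}, and then invoke Theorem~\ref{thm:bigksums:extendinggallagher} to replace $T_\ell(h)$ by $h^\ell$. One bookkeeping slip to fix: the number of $(a_1,\dots,a_r)$ with a given $\ell$-element image set is $\ell!\stirlingii{r}{\ell}$, not $\stirlingii{r}{\ell}$, so the stray $1/\ell!$ in your displayed expression should be dropped (equivalently, the paper writes the coefficient as $\sigma(r,\ell)/\ell! = \stirlingii{r}{\ell}$ against the sum over \emph{ordered} distinct tuples, which is exactly $T_\ell(h)$).
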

\begin{remark}
Note that $\l$ need not be fixed as $x \to \infty$.
\end{remark}

Theorem \ref{thm:distributiontail:momentboundsmallk} then implies bounds on the tail of the distribution of primes, and in particular yields the following two corollaries.

\begin{corollary}\label{cor:distributiontail:smallktailbound}
Assume Conjecture \ref{conj:bg:HLuniform}. Let $x > 0$ and set $h = \l \log x$, where $\l(x)$ is nondecreasing as $x \to \infty$. Let $k \ll (\log h)^{1-\d}$ for some $\d > \frac 12$ and assume that $\frac{k}{\l + 1} \to \infty$ as $x\to \infty$. Let $I(x;k,h)$ be given by
\begin{equation}\label{eq:distributiontail:Ixkhdef}
I(x;k,h) := \#\left\{ n \le x : \pi(n+h) - \pi(n) \ge k\right\}.
\end{equation}
If $\l \ge 1$, then as $x \to \infty$,
\begin{equation*}
I(x;k,h) \ll x\mathrm{exp}\left(-\frac{k}{\l e}\right).
\end{equation*}
Otherwise,
\begin{equation*}
I(x;k,h) \ll x\mathrm{exp}\left(-\frac{k}{(\l+1)e}\right).
\end{equation*}
\end{corollary}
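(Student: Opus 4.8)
The plan is to deduce the tail bound from the moment asymptotics of Theorem \ref{thm:distributiontail:momentboundsmallk} via a Markov-type inequality, optimizing over the order $r$ of the moment. First, I would observe that for any $r$ in the admissible range $r \ll (\log h)^{1-\d}$, one has
\begin{equation*}
I(x;k,h) = \#\{n \le x : \pi(n+h) - \pi(n) \ge k\} \le \frac{1}{k^r}\sum_{n \le x}(\pi(n+h)-\pi(n))^r = \frac{x \, m_r(x,h)}{k^r},
\end{equation*}
since $(\pi(n+h)-\pi(n))^r \ge k^r$ on the set being counted. By Theorem \ref{thm:distributiontail:momentboundsmallk}, $m_r(x,h) = (1+o(1))\sum_{\ell=1}^r \stirlingii{r}{\ell}\l^\ell$, so it remains to bound this sum of Stirling numbers and then choose $r$ well.

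Next I would bound $B_r(\l) := \sum_{\ell=1}^r \stirlingii{r}{\ell}\l^\ell$, a truncated Bell/Touchard polynomial. When $\l \ge 1$ the dominant behavior is governed by the known estimate for Bell polynomials: $B_r(\l) \le \sum_{\ell=0}^r \stirlingii{r}{\ell}\l^\ell \le \left(\frac{r}{\log(r/\l)}\right)^r$ roughly, but for the regime $k/(\l+1) \to \infty$ with $r$ of size comparable to (a constant times) $k/\l$ it is cleaner to use the crude bound $\stirlingii{r}{\ell} \le \binom{r}{\ell}\ell^{r-\ell} \le r^{r-\ell}/(r-\ell)!$ or, most simply, $B_r(\l) \le \l^r \sum_{\ell} \stirlingii{r}{\ell} = \l^r B_r$ when $\l \ge 1$, where $B_r$ is the $r$th Bell number, together with $B_r \le (r/\log r)^r$ for large $r$. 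Substituting, one gets
\begin{equation*}
I(x;k,h) \ll x \left(\frac{\l \, r}{k \log r}\right)^r.
\end{equation*}
Now optimize: choosing $r = c \, k/(\l e)$ for a suitable constant $c$ slightly less than $1$ (legitimate since $k/(\l+1) \to \infty$ forces $r \to \infty$, while $r \ll (\log h)^{1-\d}$ must be checked — this is where the hypotheses on the relative sizes of $k$, $\l$, $h$ enter, and it is the genuinely delicate bookkeeping step) makes $\l r/(k \log r) \le e^{-1+o(1)}$, yielding $I(x;k,h) \ll x \exp(-r) = x\exp(-(1+o(1))k/(\l e))$, and absorbing the $o(1)$ gives the stated bound. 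For $\l < 1$ the elementary inequality $B_r(\l) \le B_r(1) = B_r$ replaces the factor $\l^r$ by $1$, which is why the denominator in the exponent becomes $(\l+1)e$ rather than $\l e$: we simply run the same optimization with $\l$ replaced by $\l+1 \ge 1$ (using $B_r(\l)\le B_r(\l+1)$), so that $r = c\,k/((\l+1)e)$ is the right choice in both cases.

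The main obstacle I anticipate is not the inequality itself but verifying that the near-optimal choice $r \asymp k/((\l+1)e)$ actually lies in the range $r \ll (\log h)^{1-\d}$ where Theorem \ref{thm:distributiontail:momentboundsmallk} applies; this forces a compatibility constraint between $k$, $\l$, and $h = \l\log x$, and one must check that the hypothesis $k \ll (\log h)^{1-\d}$ in the corollary's statement (note $h=\l\log x$) indeed guarantees it, with room to spare for the constant $c<1$. A secondary technical point is handling the $(1+o(1))$ from the moment asymptotic uniformly as $r\to\infty$: since the $o(1)$ there may depend on $r$, one should either use a value of $r$ for which Theorem \ref{thm:distributiontail:momentboundsmallk} gives a uniform error, or absorb a slightly worse constant into the exponent (replacing $c$ by a marginally smaller constant), which is harmless for the final $\ll$ statement.
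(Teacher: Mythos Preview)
Your proposal is correct and follows the same overall strategy as the paper: apply Markov's inequality with the $r$th moment from Theorem \ref{thm:distributiontail:momentboundsmallk}, bound the Touchard polynomial $\sum_{\ell}\stirlingii{r}{\ell}\l^\ell$, and optimize in $r$. The only noteworthy difference is in the middle step. You mention the elementary bound $\stirlingii{r}{\ell}\le\binom{r}{\ell}\ell^{r-\ell}$ and then set it aside in favor of Bell-number asymptotics; the paper uses precisely that elementary bound (with the constant $\tfrac12$) together with $\ell^{r-\ell}\le r^{r-\ell}$ and the binomial theorem to obtain $\sum_{\ell}\stirlingii{r}{\ell}\l^\ell \ll (\l+r)^r$, and then takes $r=k/(\l e)$ (resp.\ $r=k/((\l+1)e)$) exactly, with no auxiliary constant $c<1$. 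This is slightly cleaner than your route through $B_r(\l)\le \l^r B_r$ and $B_r\le (r/\log r)^r$, and it sidesteps your worry about the constant in the exponent. Your concern about whether the chosen $r$ lies in the admissible range is easily dispatched: since $r\le k/e$ in both cases and $k\ll(\log h)^{1-\d}$, the constraint $r\ll(\log h)^{1-\d}$ is automatic. The uniformity of the $o(1)$ in $r$ is asserted at the end of the proof of Theorem \ref{thm:distributiontail:momentboundsmallk}.
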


\begin{corollary}\label{cor:distributiontail:biggerktailbound}
Assume Conjecture \ref{conj:bg:HLuniform}. Let $x > 0$, and assume that $h = \l \log x$; let $k = k(x)$ be an integer with no growth rate assumptions. Let $I(x;k,h)$ be defined as in \eqref{eq:distributiontail:Ixkhdef}. Then for any $\d > \frac 12$, as $x \to \infty$,
\begin{equation*}
I(x;k,h) \ll_\d x\mathrm{exp}\left((\log h)^{1-\d}(\log (\l+1) + (1-\d)\log \log h - \log k)\right).
\end{equation*}
\end{corollary}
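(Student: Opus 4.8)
The plan is to deduce the corollary from the moment estimate of Theorem~\ref{thm:distributiontail:momentboundsmallk} via Markov's inequality, choosing the moment parameter as large as the theorem allows. Write $B := \log(\l+1) + (1-\d)\log\log h - \log k$ for the quantity appearing in the exponent. If $B \ge 0$ the bound is trivial, since then $(\log h)^{1-\d}B \ge 0$ and $I(x;k,h) \le x \le x\exp((\log h)^{1-\d}B)$; so we may assume $B < 0$, i.e.\ $k > (\l+1)(\log h)^{1-\d}$. Set $r := \lceil (\log h)^{1-\d}\rceil$, so that $r \ge (\log h)^{1-\d}$, $r \ll (\log h)^{1-\d}$, and $\log r = (1-\d)\log\log h + O((\log h)^{-(1-\d)})$. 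Every $n$ counted by $I(x;k,h)$ contributes at least $k^r$ to $\sum_{n\le x}(\pi(n+h)-\pi(n))^r$, so
\begin{equation*}
I(x;k,h) \le k^{-r}\sum_{n\le x}(\pi(n+h)-\pi(n))^r = k^{-r}\,x\,m_r(x,h),
\end{equation*}
and Theorem~\ref{thm:distributiontail:momentboundsmallk} (applicable since $r \asymp (\log h)^{1-\d}$) gives $m_r(x,h) = \big(\sum_{\ell=1}^{r}\stirlingii{r}{\ell}\l^\ell\big)(1+o(1))$.

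It remains to bound the Touchard polynomial $\sum_{\ell=1}^r \stirlingii{r}{\ell}\l^\ell$ (the $r$th moment of a Poisson$(\l)$ variable) from above. Since $\l^\ell \le (\l+1)^r$ for all $0 \le \ell \le r$, and $\sum_{\ell=1}^r \stirlingii{r}{\ell} = B_r$ is the $r$th Bell number, which satisfies the crude bound $B_r \le r^r$, we obtain $\sum_{\ell=1}^r \stirlingii{r}{\ell}\l^\ell \le \big((\l+1)r\big)^r$. Feeding this back in,
\begin{equation*}
\log\frac{I(x;k,h)}{x} \le r\big(\log(\l+1) + \log r - \log k\big) + o(1) = rB + r\cdot O\big((\log h)^{-(1-\d)}\big) + o(1) = rB + O(1).
\end{equation*}
Because $B < 0$ and $r \ge (\log h)^{1-\d}$, we have $rB \le (\log h)^{1-\d}B$, whence $I(x;k,h) \ll_\d x\exp\big((\log h)^{1-\d}B\big)$, as claimed.

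The argument is essentially bookkeeping on top of Theorem~\ref{thm:distributiontail:momentboundsmallk}, so I do not expect a genuine obstacle. The one point requiring care is matching the moment order to the exponent $(\log h)^{1-\d}$: one wants $r$ at least $(\log h)^{1-\d}$ so that $rB \le (\log h)^{1-\d}B$ survives the (negative) sign of $B$, which forces the ceiling together with the separate, trivial treatment of the case $B \ge 0$; and one wants $r$ no larger than $O((\log h)^{1-\d})$ so that Theorem~\ref{thm:distributiontail:momentboundsmallk} still applies. The Bell number bound $B_r \le r^r$ is wasteful — the sharper $B_r = (r/\log r)^{r(1+o(1))}$ is available — but it affects only lower-order terms, so $r^r$ suffices for the stated estimate.
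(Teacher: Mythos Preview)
Your proof is correct and follows the same route as the paper: apply Markov's inequality to the $r$th moment, invoke Theorem~\ref{thm:distributiontail:momentboundsmallk}, and take $r = (\log h)^{1-\d}$. The only cosmetic difference is that the paper bounds the Touchard polynomial by $(\l+r)^r$ via $\stirlingii{r}{\ell} \le \tfrac12\binom{r}{\ell}\ell^{r-\ell}$, whereas you use the Bell number bound to get $((\l+1)r)^r$; since $(\l+r)^r \le ((\l+1)r)^r$ for $r\ge 1$, both lead to the stated estimate.
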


For example, taking $k = \log h$, Corollary \ref{cor:distributiontail:biggerktailbound} says that for all $\d > \frac 12$, assuming Conjecture \ref{conj:bg:HLuniform},
\begin{equation*}
I(x;\log h, h) \ll_\d x\exp\left((\log h)^{1-\d}(\log\l- \d\log \log h)\right).
\end{equation*}
In \cite{MR3530450}, Maynard proves lower bounds on the same problem, showing that for any $x,y \ge 1$ there are $\gg x \exp\left(-\sqrt{\log x}\right)$ integers $n \le x$ such that $\pi(n+y)-\pi(n) \gg \log y$, which in this case corresponds to the condition that there are $\gg \log \log x$ primes in intervals of width $\l \log x$. Both upper and lower bounds are reasonably far from the Poisson prediction in \eqref{eq:poissonpredictionforloglogprimes}.

It seems reasonable to conjecture that the Poisson prediction should still hold when $k \sim \log h$ or $k \sim (\log h)^2$, and perhaps even larger. At this point, both upper and lower bounds are far from matching this conjecture.
\begin{conjecture}\label{conj:distributiontail:poissontailforprimes}
Let $x > 1$ and let $h = \l \log x$, with $\l = o((\log x)^\ep)$ for all $\ep > 0$. Let $k \ll (\log h)^2$. Define
\begin{equation}\label{eq:distributiontail:pikxhforpoissontail}
\pi_k(x;h) := \#\{n \le x: \pi(n+h)-\pi(n) = k\}.
\end{equation}
Then $\pi_k(x;h) \sim x \frac{\l^k e^{-\l}}{k!}$ as $x \to \infty$.
\end{conjecture}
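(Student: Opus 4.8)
The natural plan is to run Gallagher's moment method while tracking all uniformity in $k$ and $\l$, using an inclusion--exclusion (Bonferroni) expansion of the point mass. Write $N(n) := \pi(n+h)-\pi(n)$ and recall Jordan's identity
\[
\mathbf 1[N(n) = k] = \sum_{j \ge k}(-1)^{j-k}\binom{j}{k}\binom{N(n)}{j},
\]
together with the associated Bonferroni inequalities: truncating the right-hand side at $j = J$ over- or under-estimates the left-hand side according to the parity of $J-k$. Summing over $n \le x$ reduces the problem to controlling the binomial moments $S_j(x;h) := \sum_{n \le x}\binom{N(n)}{j}$ for $j$ up to a truncation level $J = J(x)$ to be chosen.

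To evaluate $S_j(x;h)$, expand $\binom{N(n)}{j}$ as $\tfrac{1}{j!}$ times the number of ordered $j$-tuples of distinct primes in $(n,n+h]$ and swap the order of summation:
\[
S_j(x;h) = \frac{1}{j!}\sum_{\substack{h_1,\dots,h_j \le h \\ \text{distinct}}}\;\sum_{n \le x}\mathbf 1_{\mathcal P}(n+h_1)\cdots\mathbf 1_{\mathcal P}(n+h_j).
\]
For admissible $\mcH = \{h_1,\dots,h_j\}$ apply Conjecture \ref{conj:bg:HLuniform}, which is available here since $j \ll (\log h)^2 \le (\log\log x)^3$ and $h = \l\log x \le (\log x)^2$; non-admissible $\mcH$ have $\mathfrak S(\mcH) = 0$ and are harmless, as the relevant product of indicators vanishes for all but finitely many $n$. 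The residual sum over tuples is exactly $T_j(h)$, so
\[
S_j(x;h) = \frac{1}{j!}\Bigl(T_j(h)\,\mathrm{li}_j(x) + O\bigl(h^j x^{1-\ep}\bigr)\Bigr).
\]
Now one invokes the expected extension of Theorem \ref{thm:bigksums:extendinggallagher} — that $T_j(h) = h^j(1+o(1))$ in the relevant range of $j$, the range the introduction flags as the natural conjectural endpoint — together with $\mathrm{li}_j(x) = \tfrac{x}{(\log x)^j}(1+o(1))$, valid uniformly for $j = o(\log x)$, and $h = \l\log x$, to obtain $S_j(x;h) = \tfrac{x\,\l^j}{j!}(1+o(1))$ uniformly over $j \le J$. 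For $j \ll (\log h)^{1-\d}$ this asymptotic is already unconditional by Theorem \ref{thm:bigksums:extendinggallagher}, while Theorems \ref{thm:bigksums:extendinggallagher} and \ref{thm:bigk:sumboundallk} are precisely the results whose reach would have to be extended.

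To assemble the estimate, pick $J \to \infty$ exceeding a large fixed multiple of $k+\l$; when $\l$ is not too large one can keep $J = O((\log h)^2)$, the range where Conjecture \ref{conj:bg:HLuniform} and the hoped-for averaging apply. Bonferroni then sandwiches $\pi_k(x;h)$ between the two truncations $\sum_{j=k}^{J_1}(-1)^{j-k}\binom{j}{k}S_j(x;h)$ and $\sum_{j=k}^{J_2}(-1)^{j-k}\binom{j}{k}S_j(x;h)$ with $J_1,J_2 \in \{J,J+1\}$ of appropriate parities. Substituting $S_j(x;h) = \tfrac{x\l^j}{j!}(1+o(1))$ produces the main term $x\sum_{j\ge k}(-1)^{j-k}\binom{j}{k}\tfrac{\l^j}{j!} = x\tfrac{\l^k e^{-\l}}{k!}$, up to three errors: (i) the tail of this alternating series beyond $j = J$, bounded by its first omitted term $x\tfrac{\l^{J+1}}{k!\,(J+1-k)!}$, which is $o\bigl(x\tfrac{\l^k e^{-\l}}{k!}\bigr)$ by Stirling once $J/(k+\l)\to\infty$; (ii) the accumulated error $x\sum_{k\le j\le J}\binom{j}{k}\tfrac{\l^j}{j!}\,o(1)$, which needs the $o(1)$ in $S_j$ to be \emph{uniform} over $j \le J$; and (iii) the Hardy--Littlewood errors $\sum_{j\le J}\binom{j}{k}\tfrac{h^j x^{1-\ep}}{j!}$, negligible because $J = O((\log\log x)^2)$ forces $h^J = x^{o(1)}$.

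The crux, and the reason this remains a conjecture, is the input to (ii): one needs $T_j(h) = h^j(1+o(1))$ with an \emph{effective and uniform} error out to sets of size comparable to $k+\l$. Theorem \ref{thm:bigksums:extendinggallagher} reaches only $(\log h)^{1-\d}$, and Theorem \ref{thm:bigk:sumboundallk} loses a factor $(3\log j)^j$, which is far too lossy to survive the cancellation in the Bonferroni sum near $j \asymp k$. A Gallagher-type average of $\mathfrak S(\mcH)$ over sets of size $(\log h)^2$ with a power-saving uniform error would settle the case of bounded or slowly growing $\l$; when $\l$ is large the target $\tfrac{\l^k e^{-\l}}{k!}$ is exponentially small, so controlling every error to that precision — and obtaining the required averaging out to sets of size $\sim\l$ — is a second and likely more serious obstacle. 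Granting the necessary averaging, the remainder is routine bookkeeping with Stirling's formula, the choice of $J$, and the treatment of non-admissible tuples and of the integrality of the shifts.
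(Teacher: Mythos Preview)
The statement you are asked to prove is labelled a \emph{Conjecture} in the paper, and the paper offers no proof of it. The surrounding text is explicit: ``It seems reasonable to conjecture that the Poisson prediction should still hold when $k \sim \log h$ or $k \sim (\log h)^2$ \dots At this point, both upper and lower bounds are far from matching this conjecture.'' So there is no ``paper's own proof'' against which to compare.

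That said, your proposal is not really a proof attempt either --- you are transparent that it is a conditional roadmap --- and as such it is well-aligned with the paper's own diagnosis. Your Bonferroni/moment outline is exactly the natural extension of Gallagher's argument, and you correctly isolate the two obstructions. First, the averaging input: you need $T_j(h) = h^j(1+o(1))$ uniformly for $j$ up to a large multiple of $k+\l$, whereas Theorem~\ref{thm:bigksums:extendinggallagher} only reaches $j \ll (\log h)^{1-\d}$ and Theorem~\ref{thm:bigk:sumboundallk} loses $(3\log j)^j$; the paper flags precisely this gap when it remarks that \eqref{eq:bigksums:gallagheraverage} ought to extend to $k = O((\log h)^2)$. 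Second, the large-$\l$ regime: since the hypothesis allows $\l$ as large as, say, $(\log\log x)^{10}$, your truncation $J$ must then exceed $(\log\log x)^3$, which already leaves the range of Conjecture~\ref{conj:bg:HLuniform} as stated, on top of the averaging problem. You note both issues yourself.

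One small inaccuracy: you write ``$J = O((\log\log x)^2)$ forces $h^J = x^{o(1)}$'' in handling the Hardy--Littlewood error, but in the full generality of the conjecture $J$ need not be $O((\log\log x)^2)$ once $\l$ is permitted to grow faster than $(\log h)^2$. The bound $h^J = x^{o(1)}$ still holds provided $J\log h = o(\log x)$, which is fine up to $J \ll (\log\log x)^A$ for any fixed $A$, but the constraint from Conjecture~\ref{conj:bg:HLuniform} is sharper and is the binding one.
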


In Section \ref{sec:distributiontail:unconditionalbounds}, we prove unconditional bounds on the tail of the distribution of primes. For these arguments we use a Selberg sieve bound instead of applying the Hardy--Littlewood conjectures. The Selberg sieve bound for prime $k$-tuples has an extra factor of $2^k k!$ from the Hardy--Littlewood prediction. This factor is larger than our bound on the average of $k$-term singular series in Theorem \ref{thm:bigk:sumboundallk}, so the following unconditional bound is weaker than the moment bounds in Theorem \ref{thm:distributiontail:momentboundsmallk}. However, this weaker bound applies for much larger moments; in particular, for the $r$th moment when $r = o((\log x)^{1/4})$.
\begin{theorem}\label{thm:distributiontail:unconditionalmomentbound}
Let $x > 0$, let $h = \l \log x = o(x)$ and let $r = o((\log x)^{1/4})$. Define the $r$th moment $m_r(x,h)$ of the distribution of primes in intervals of size $h$ as in \eqref{eq:rmomentlogdef}.
Then
\begin{equation*}
m_r(x,h) \ll (\l+1)^r r^{2r} e^{O(r \log \log r)}.
\end{equation*}
\end{theorem}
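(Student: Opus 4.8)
The plan is to expand the $r$th power combinatorially, reduce to the sums $T_\ell(h)$ of $\ell$-term singular series that Theorem~\ref{thm:bigk:sumboundallk} already bounds, and use a Selberg sieve upper bound in the place where the proof of Theorem~\ref{thm:distributiontail:momentboundsmallk} invokes the Hardy--Littlewood conjectures. Write $\pi(n+h)-\pi(n)=\sum_{j=1}^{h}\mathbf 1_{\mathcal P}(n+j)$, expand $(\pi(n+h)-\pi(n))^r=\sum_{1\le j_1,\dots,j_r\le h}\prod_{i=1}^{r}\mathbf 1_{\mathcal P}(n+j_i)$, and group the $r$-tuples $(j_1,\dots,j_r)$ by the set partition of $\{1,\dots,r\}$ recording which coordinates coincide: each partition into $\ell$ nonempty blocks accounts for an ordered choice of $\ell$ distinct shifts, and there are $\stirlingii{r}{\ell}$ of them. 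Summing over $n\le x$ and dividing by $x$ gives
\[
m_r(x,h)=\sum_{\ell=1}^{r}\stirlingii{r}{\ell}\,\frac1x\subsum{a_1,\dots,a_\ell\le h\\ \text{distinct}}\ \sum_{n\le x}\prod_{i=1}^{\ell}\mathbf 1_{\mathcal P}(n+a_i).
\]
This is exactly the reduction underlying Theorem~\ref{thm:distributiontail:momentboundsmallk}; only the treatment of the inner sum over $n$ changes.

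In place of Conjecture~\ref{conj:bg:HLuniform}, I would invoke the standard $\ell$-dimensional Selberg upper bound sieve: for every $\ell$-element set $\mathcal H=\{a_1,\dots,a_\ell\}\subseteq[1,h]$,
\[
\sum_{n\le x}\prod_{i=1}^{\ell}\mathbf 1_{\mathcal P}(n+a_i)\ \ll\ 2^{\ell}\,\ell!\,\mathfrak S(\mathcal H)\,\frac{x}{(\log x)^{\ell}}+E_{\mathcal H},
\]
with an absolute implied constant, where $E_{\mathcal H}$ collects the sieve error term (together with the $O(\ell)$ contribution of small $n$ when $\mathcal H$ is inadmissible, in which case $\mathfrak S(\mathcal H)=0$). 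Summing over the distinct ordered $\ell$-tuples of shifts turns $\sum_{\mathcal H}\mathfrak S(\mathcal H)$ into $T_\ell(h)$, while the total of the $E_{\mathcal H}$ over the at most $h^{r}$ tuples, divided by $x$, is negligible once $h=o(x)$ and $r=o((\log x)^{1/3})$: then $(\log x)^{r}$ is only a small power of $x$, and the sieve can be run at a suitable power of $x$. Hence
\[
m_r(x,h)\ \ll\ \sum_{\ell=1}^{r}\stirlingii{r}{\ell}\,2^{\ell}\,\ell!\,\frac{T_\ell(h)}{(\log x)^{\ell}}.
\]

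Applying Theorem~\ref{thm:bigk:sumboundallk} in the form $T_\ell(h)\ll h^{\ell}(3\log\ell)^{\ell}$ and using $h=\l\log x$ to cancel the $(\log x)^{-\ell}$, we are left with
\[
m_r(x,h)\ \ll\ \sum_{\ell=1}^{r}\stirlingii{r}{\ell}\,2^{\ell}\,\ell!\,\l^{\ell}(3\log\ell)^{\ell}.
\]
Now bound the factors crudely and separately: $\stirlingii{r}{\ell}\le\ell^{r}\le r^{r}$, $\ell!\le\ell^{\ell}\le r^{r}$, $\l^{\ell}\le(\l+1)^{r}$, and $(6\log\ell)^{\ell}\le(6\log r)^{r}=e^{O(r\log\log r)}$; since there are only $r$ terms, this gives $m_r(x,h)\ll(\l+1)^{r}r^{2r}e^{O(r\log\log r)}$, which yields the claimed bound (the $(\l+1)^{r}$ collapses to $\l^{r}e^{O(r)}$ for $\l\gg1$). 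The exponent $2r$ is generous here — keeping $\stirlingii{r}{\ell}\ell!\le\ell^{r}$ together would already give $r^{r+1}$.

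The one step requiring genuine care is the sieve input: one needs the multidimensional upper bound sieve with the \emph{explicit} dependence $2^{\ell}\ell!$ on the dimension and \emph{uniform} over all shift sets in $[1,h]$ at once, and one must verify that the accumulated error $\sum_{\mathcal H}E_{\mathcal H}$ stays $o(x)$ throughout the stated range; this is precisely where $h=o(x)$ and $r=o((\log x)^{1/3})$ enter. The combinatorial expansion of the first step and the final estimation of the sum over $\ell$ are routine once Theorem~\ref{thm:bigk:sumboundallk} is in hand.
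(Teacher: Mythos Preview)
Your proposal is correct and follows essentially the same route as the paper: expand the moment via Stirling numbers, replace the Hardy--Littlewood input by a Selberg sieve upper bound of the shape $(2+\ep)^{\ell}\ell!\,\mathfrak S(\mathcal H)\,x/(\log x)^{\ell}$, apply Theorem~\ref{thm:bigk:sumboundallk} to $T_\ell(h)$, and estimate the resulting sum over $\ell$. The only point the paper makes more explicit is the sieve input itself, which it states and proves as Theorem~\ref{thm:distributiontail:selbergsievethm} (tracking the $k$-dependence in the Halberstam--Richert argument so that the error is $O(1)$ precisely when $r=o((\log x)^{1/3})$); you correctly flag this as the step requiring genuine care.
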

As before, this bound on moments yields the following corollary on intervals containing many primes.
\begin{corollary}\label{cor:distributiontailuncondtionalbound}
Let $x> 0$, let $h =\l \log x$, where $\l$ is a nondecreasing function of $x$. Let $k$ be an integer dependent on $x$ and assume that $k = o((\log x)^{1/6})$ and that $k/\l \to \infty$ as $x\to \infty$. Define $I(x;k,h)$ as in \eqref{eq:distributiontail:Ixkhdef}. Then for some constant $C$,
\begin{equation*}
I(x;k,h) \ll x\exp\left(-\sqrt{\frac{k}{(\l+1)e}}2^{C/2}\left(\log \frac{k}{(\l+1)e}\right)^{-C/2}\right).
\end{equation*}
\end{corollary}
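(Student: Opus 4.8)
The plan is to run the moment method: bound $I(x;k,h)$ by a high moment of the prime-counting increments via Markov's inequality, substitute the unconditional moment bound of Theorem~\ref{thm:distributiontail:unconditionalmomentbound}, and optimize over the order $r$ of the moment. Indeed, for any positive integer $r$, every $n \le x$ counted by $I(x;k,h)$ satisfies $(\pi(n+h) - \pi(n))^r \ge k^r$, so
\begin{equation*}
I(x;k,h) \le \frac{1}{k^r} \sum_{n \le x} (\pi(n+h)-\pi(n))^r = \frac{x}{k^r}\, m_r(x,h).
\end{equation*}
As long as $r = o((\log x)^{1/3})$, Theorem~\ref{thm:distributiontail:unconditionalmomentbound} gives $m_r(x,h) \ll \l^r r^{2r} e^{O(r \log\log r)}$, whence
\begin{equation*}
I(x;k,h) \ll x\left(\frac{\l r^2}{k}\right)^r e^{O(r\log\log r)} = x \exp\!\left( r\Bigl(2\log r - \log\tfrac{k}{\l}\Bigr) + O(r\log\log r)\right).
\end{equation*}

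It remains to choose $r$. Set $u := k/\l$, which tends to infinity by hypothesis, and let $A$ be the implied constant in the $O(r\log\log r)$ term. The ``clean'' choice $r \asymp \sqrt{u}$ — which would make the first bracket vanish to leading order and give an exponent of size $-\Theta(\sqrt u)$ — is not admissible, because then $r\log\log r \asymp \sqrt u \log\log u$ is of the same order as the would-be main term and of the wrong sign; handling this is the crux of the argument. I instead take $r = \lfloor c\,\sqrt u\,(\log u)^{-A}\rfloor$ for a suitable small constant $c > 0$. For this $r$, the contributions $-A\log\log u$ from $2\log r$ and $+A\log\log u$ from $A\log\log r$ cancel to leading order, so that $2\log r - \log\tfrac{k}{\l} + A\log\log r \le -c_0$ for some $c_0 > 0$ once $x$ is large; hence the exponent above is $\le -c_0 r \asymp -\sqrt u (\log u)^{-A}$. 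Substituting the value of $r$ and absorbing constants (re-choosing the constant $C$ in the statement, and using $\log(k/(\l e)) \sim \log(k/\l)$ and $\sqrt{k/(\l e)} \asymp \sqrt{k/\l}$; the precise shape $k/(\l e)$ falls out of optimizing $r \mapsto r(2\log r - \log(k/\l))$ more carefully, whose stationary point is at $r = \sqrt{k/\l}/e$, and may be weakened freely since we only need an upper bound) yields
\begin{equation*}
I(x;k,h) \ll x\exp\!\left(-\sqrt{\frac{k}{\l e}}\; 2^{C/2}\left(\log\frac{k}{\l e}\right)^{-C/2}\right),
\end{equation*}
as claimed.

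Finally I check consistency of the choice of $r$. Since $\l$ is nondecreasing it is bounded below by a fixed positive constant, so $u \ll_\l k$ and therefore $r \ll_\l \sqrt k = o((\log x)^{1/12})$, comfortably inside the range $r = o((\log x)^{1/3})$ demanded by Theorem~\ref{thm:distributiontail:unconditionalmomentbound}; and $r \to \infty$ because $k/\l \to \infty$, so the exponent genuinely tends to $-\infty$ and the bound is non-trivial. The only genuinely delicate point is the interplay, described above, between the $e^{O(r\log\log r)}$ factor in the moment bound and the optimal choice of $r$: this factor is exactly what forces the loss of the power $(\log(k/(\l e)))^{-C/2}$ relative to the bound $x\exp(-\Theta(\sqrt{k/\l}))$ that a $\log\log$-free moment estimate would give.
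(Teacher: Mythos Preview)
Your approach is essentially identical to the paper's: Markov on the $r$th moment, the unconditional bound of Theorem~\ref{thm:distributiontail:unconditionalmomentbound}, and the choice $r \asymp \sqrt{k/\lambda}\,(\log(k/\lambda))^{-\text{const}}$; the paper simply takes $r = (k/(\lambda e))^{1/2}\,2^{C/2}(\log(k/(\lambda e)))^{-C/2}$ outright and observes this yields $e^{-r}$. One small slip: with $r = c\sqrt{u}(\log u)^{-A}$ the $\log\log u$-term in $2\log r$ is $-2A\log\log u$, not $-A\log\log u$, so it does not cancel against $A\log\log r$ but rather leaves $-A\log\log u + O(1)$ --- this only helps, and your conclusion $2\log r - \log(k/\lambda) + A\log\log r \le -c_0$ remains valid.
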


It may also be possible to achieve weaker, yet nontrivial, bounds for larger $k$, along the lines of the bounds in Corollary \ref{cor:distributiontail:biggerktailbound}. 
We predict that the bound in Corollary \ref{cor:distributiontail:smallktailbound} should hold for any $k \ll (\log h)^2$, instead of merely $k \ll (\log h)^{1-\d}$. 

\begin{conjecture}\label{conj:distributiontail:weakertailforprimes}
Let $x > 1$ and let $h = \l \log x$, with $\l = o((\log x)^\ep)$ for all $\ep > 0$. Let $k \ll (\log h)^2$, and define $\pi_k(x;h)$ as in \eqref{eq:distributiontail:pikxhforpoissontail}. Then $\pi_k(x;h) \ll x\mathrm{exp}\left(-\frac{k}{\l e}\right)$ as $x \to \infty$. 
\end{conjecture}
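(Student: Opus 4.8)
Since the statement is a conjecture, I describe the route it invites and the single ingredient that is currently out of reach. As $\pi_k(x;h) \le I(x;k,h)$ with $I$ as in \eqref{eq:distributiontail:Ixkhdef}, it is enough to bound $I(x;k,h)$, and the efficient way is through one high falling-factorial moment: writing $N_n := \pi(n+h)-\pi(n)$ and using $\mathbf 1[m \ge k] \le \binom{m}{k}$ for integers $m \ge 0$,
\[
I(x;k,h) \le \frac{1}{k!}\sum_{n \le x} N_n(N_n-1)\cdots(N_n-k+1) = \frac{1}{k!}\sum_{\substack{h_1,\dots,h_k \le h \\ \text{distinct}}}\ \sum_{n \le x}\mathbf 1_{\mathcal P}(n+h_1)\cdots\mathbf 1_{\mathcal P}(n+h_k).
\]
Conjecture \ref{conj:bg:HLuniform} applies here, since $k \ll (\log h)^2 \ll (\log\log x)^3$ and $h = \l\log x \le (\log x)^2$, so the inner sum over $n$ is $\mathfrak S(h_1,\dots,h_k)\,\mathrm{li}_k(x) + O(x^{1-\ep})$; the total contribution of the error terms, $O(h^k x^{1-\ep})$, is negligible because $k \ll (\log h)^2$, and $\mathrm{li}_k(x) = (1+o(1))x/(\log x)^k = (1+o(1))x\l^k/h^k$. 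Recalling $T_k(h)$ from \eqref{eq:bigksums:defofTkh}, this gives
\[
I(x;k,h) \le \frac{x\,\l^k}{h^k\,k!}\,T_k(h)\,(1+o(1)).
\]

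The problem is now reduced to estimating the normalized average $T_k(h)/h^k$ for $k$ up to $(\log h)^2$. If one grants the strengthening of Theorem \ref{thm:bigksums:extendinggallagher} to this range that is conjectured in the introduction — namely $T_k(h) = h^k(1+o(1))$ for $k \ll (\log h)^2$ — then $I(x;k,h) \le (1+o(1))x\l^k/k! \ll x(e\l/k)^k$, which by Stirling is at most $x\exp(-k/(\l e))$ once $\log(k/(e\l)) \ge 1/(\l e)$, i.e. once $k/\l$ is a little large; that is exactly the regime in which the conjectured bound is not already trivial (compare the hypothesis $k/(\l+1)\to\infty$ in Corollary \ref{cor:distributiontail:smallktailbound}, whose conclusion this argument would extend to $k \ll (\log h)^2$). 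Even without the asymptotic, the unconditional bound $T_k(h) \ll h^k e^{O(k\log\log k)}$ of Theorem \ref{thm:bigk:sumboundallk} already yields $I(x;k,h) \ll x(e\l/k)^k e^{O(k\log\log k)} = x\exp(-k\log k + O(k\log\log k) + O(k\log(\l+1)))$, and this is $\ll x\exp(-k/(\l e))$ whenever $\l$ is not too close to $k$ (for instance $\l \le k/(\log k)^C$ for a suitable absolute constant $C$). So the one genuinely missing ingredient is a bound $T_k(h) = h^k e^{o(k)}$ — even this weaker form would be enough — valid uniformly for $k \ll (\log h)^2$.

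Proving that bound is the main obstacle, and it is exactly the open problem flagged after Theorem \ref{thm:bigksums:extendinggallagher}. The $e^{O(k\log\log k)}$ loss in Theorem \ref{thm:bigk:sumboundallk} comes from the local factors of the singular series at the primes $p \le k^{2+\ep}$, and the combinatorial identities that force those factors to average to $1$ — which underlie Gallagher's proof and that of Theorem \ref{thm:bigksums:extendinggallagher} — cease to be controllable once $k$ grows past roughly $\log h$. One could instead run the moment method of Theorem \ref{thm:distributiontail:momentboundsmallk} with $r$ of order $k$ and optimize over $r$, but this needs $T_\ell(h)/h^\ell$ for every $\ell \le r$ rather than just $\ell = k$, so it meets the same wall with no evident gain. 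Thus the conjecture should follow with essentially no new input beyond the separately conjectured extension of Theorem \ref{thm:bigksums:extendinggallagher} into the range $k \ll (\log h)^2$, and that extension is where the real work lies.
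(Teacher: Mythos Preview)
The statement is a conjecture, and the paper offers no proof; it is stated immediately after the sentence ``We predict that the bound in Corollary \ref{cor:distributiontail:smallktailbound} should hold for any $k \ll (\log h)^2$, instead of merely $k \ll (\log h)^{1-\d}$,'' which is exactly the heuristic you reconstruct. Your identification of the single missing ingredient --- an estimate $T_k(h) = h^k e^{o(k)}$ uniformly for $k \ll (\log h)^2$, which the paper flags as open right after Theorem~\ref{thm:bigksums:extendinggallagher} --- is correct and matches the paper's own framing.

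One small difference worth noting: the paper's route to Corollary~\ref{cor:distributiontail:smallktailbound} goes through the ordinary moments $m_r(x,h)$, expands via Stirling numbers, and optimizes over $r$, whereas you go directly through the $k$th falling-factorial moment using $\mathbf 1[m\ge k]\le\binom{m}{k}$. Your route is slightly cleaner (it lands directly on $T_k(h)/(h^k k!)$ without the Stirling sum or the optimization in $r$), but as you correctly observe in your last paragraph, both approaches require control of $T_\ell(h)/h^\ell$ in the same range and therefore meet the same wall. The checks you make --- that Conjecture~\ref{conj:bg:HLuniform} applies because $(\log h)^2 \ll (\log\log x)^3$ and $h\le(\log x)^2$, that the Hardy--Littlewood error $O(h^k x^{1-\ep})$ is negligible, and that $(e\l/k)^k \le \exp(-k/(\l e))$ once $k/\l$ is moderately large --- are all sound.
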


To put these results in perspective, let us consider the case when $\l = 1$, i.e. primes in intervals of width $\log x$. In \cite{gallaghershortintervals}, Gallagher shows that for fixed $k$, the number of $n\le x$ such that the interval $(n,n+\log x]$ contains exactly $k$ primes is asymptotic to the Poisson prediction $\frac{x}{ek!}$, assuming the Hardy--Littlewood conjectures. Unconditionally, Gallagher shows in \cite{gallaghershortintervals} that the number of $n \le x$ such that $(n,n+\log x]$ contains exactly $k$ primes is $\lesssim xe^{-Ck}$, for an absolute constant $C$.

We instead consider the probability that an interval $(n,n+\log x]$ contains at least $\log \log x$ primes. 
In this case, the Poisson prediction for the probability that an interval contains $\log \log x$ primes is $\frac{(\log x) e^{-1}}{(\log \log x)^{\log \log x}}$, which for any $A > 0$ is $\ll \frac 1{(\log x)^A}$. In \cite{MR3530450}, Maynard proves a lower bound; namely, that at least $\gg x\exp(-\sqrt{\log x})$ intervals $(n,n+\log x]$, with $n \le x$, contain $\gg \log\log x$ primes. In Corollary \ref{cor:distributiontail:biggerktailbound}, we show that, assuming the uniform Hardy--Littlewood conjectures, for all $\d > \frac 12$, the number of $n\le x$ such that $(n,n+\log x]$ contains at least $\log \log x$ primes is $\ll_{\d} x\exp\left(-\d\log\log\log x(\log \log x)^{1-\d}\right)$. Unconditionally, we show in Corollary \ref{cor:distributiontailuncondtionalbound} that there exists $C > 0$ such that the number of $(n,n+\log x]$ containing at least $\log \log x$ primes is $\ll x \exp\left(-\sqrt{2^C/e}(\log \log x)^{1/2}(\log \log \log x -1)^{-C/2}\right)$.

When $k$ is slightly smaller than $\log \log x$, that is, when $k \ll (\log h)^{(1-\d)}$, Corollary \ref{cor:distributiontail:smallktailbound} achieves the bound of $\frac 1{(\log x)^A}$ with $A = \frac 1{\l e}$. Bounding this probability by $\frac 1{(\log x)^A}$ for any $A > 0$ may be within reach even if the Poisson prediction itself is not. On the other hand, many questions concerning the tail of the distribution of primes are quite delicate, especially concerning the maximum and minimum number of primes in an interval of a certain size. For example, in \cite{MR783576}, Maier proved that intervals of size $(\log x)^A$ for $A > 2$ can contain surprisingly few or surprisingly many primes. For more information, see \cite{GranvilleLumley}. The tail of the distribution of primes is also studied in \cite{MR3822615}.

\section{Averages for large sets}\label{sec:bigksums:gallagher}

In this section, we prove Theorems \ref{thm:bigksums:extendinggallagher} and \ref{thm:bigk:sumboundallk}. We begin with Theorem \ref{thm:bigksums:extendinggallagher}, whose proof closely follows Gallagher's original proof in \cite{gallaghershortintervals}.

\begin{proof}[Proof of Theorem \ref{thm:bigksums:extendinggallagher}, following Gallagher.]
Let $\nu_{\mathcal H}(p):= \#\mathcal H \mod p$. For a set $\mathcal H = \{h_1, \dots, h_k\}$, write $D_{\mcH} = \prod_{i < j} (h_i-h_j)$, so that $\nu_{\mcH}(p) = k$ unless $p|D_{\mcH}$. Define
\begin{equation*}
a(p,\nu):= \frac{p^k - \nu p^{k-1}-(p-1)^k}{(p-1)^k},
\end{equation*}
so that the $p$th factor of $\mathfrak S(\mcH)$ is given by $\frac{1-\nu_{\mcH}(p)/p}{(1-1/p)^k} = 1 + a(p,\nu_{\mcH}(p))$. 

For any prime $p > k$,
\begin{equation*}
a(p,k) = \sum_{j=2}^k (p-1)^{-j} \binom kj (1-j) \ll k^2 (p-1)^{-2}.
\end{equation*}
This and a similar computation for $\nu < k$ shows that for $p > k$,
\begin{equation}\label{eq:bigksums:gallapboundsbigp}
|a(p,\nu)| \ll \begin{cases} k^2 (p-1)^{-2} & \text{ if } \nu = k \\ k^2 (p-1)^{-1} &\text{ if } \nu < k. \end{cases}
\end{equation}
For $p \le k$, 
\begin{equation}\label{eq:bigksums:gallapboundssmallp}
|a(p,\nu)| = \left|-1 + \frac{1-\nu/p}{(1-1/p)^{k}}\right| \le \left(1 - \frac 1p\right)^{-k} < e^{2k/p},
\end{equation}
since $\left(1-\frac 1p\right)^{-k}=\exp\left(k\sum_{j=1}^\infty \frac 1{jp^j}\right) < \exp\left(\frac{k}{p(1-1/p)}\right) \le \exp(2k/p)$.

For squarefree $q$, write $a_{\mcH}(q) := \prod_{p|q} a(p,\nu_{\mcH}(p))$, so that
\begin{equation*}
\S(\mcH) = \prod_{p \le k} \frac{1-\nu_{\mcH}(p)/p}{(1-1/p)^k} \subsum{q \ge 1 \\ p|q \thus p > k} \mu^2(q)a_{\mcH}(q).
\end{equation*}
Using the bounds on $a(p,\nu)$, for any $x$,
\begin{equation*}
\subsum{q > x \\ p|q \thus p>k} |a_{\mcH}(q)| \le \subsum{q > x \\ p|q \thus p> k} \frac{\mu^2(q) (Ck^2)^{\omega(q)}}{\phi^2(q)}\phi((q,D_{\mcH})),
\end{equation*}
where $\omega(q)$ is the number of prime factors of $q$ and $C$ is an absolute positive constant. 

Writing $q = de$ with $d|D_{\mcH}$ and $(e,D_{\mcH}) = 1$, this is
\begin{equation}\label{eq:bigk:termsbiggerthanx}
\subsum{d|D_{\mcH} \\ p|d \thus p > k} \frac{\mu^2(d)(Ck^2)^{\omega(d)}}{\phi(d)} \subsum{e>x/d \\ (e,D_{\mcH}) = 1 \\ p|e \thus p > k} \frac{\mu^2(e) (Ck^2)^{\omega(e)}}{\phi^2(e)}.
\end{equation}
Apply Rankin's trick to bound the inner sum, so that for any choice of fixed $\a$ with $0 < \a < 1$, 
\begin{align*}
\subsum{e>x/d \\ (e,D_{\mcH}) = 1 \\ p|e \thus p > k} \frac{\mu^2(e) (Ck^2)^{\omega(e)}}{\phi^2(e)} &\le \subsum{e \ge 1 \\ (e,D_{\mcH}) = 1 \\ p|e \thus p > k} \left(\frac{e}{x/d}\right)^\a \frac{\mu^2(e) (Ck^2)^{\omega(e)}}{\phi^2(e)}. \\
\end{align*}
By multiplicativity, this is
\begin{align*}
&= \left(\frac dx \right)^{\a} \prod_{\substack{p > k \\ p\nmid D_{\mcH}}} \left( 1 + \frac{Ck^2 p^{\a}}{(p-1)^2}\right) \le \left(\frac dx \right)^{\a} \mathrm{exp}\left(Ck^2 \sum_{\substack{p > k}} \frac{p^{\a}}{(p-1)^2}\right) \ll \left(\frac dx \right)^{\a} e^{Ck^{1 + \a}}.
\end{align*}

Since $\frac 12 < \delta < 1$, we can choose $\a > 0$ small enough that $(1-\d)(2+2\a) + \a < 1$, which also implies that $(1-\d)(1+\a) < 1$.
Plugging the bound for the inner sum into \eqref{eq:bigk:termsbiggerthanx}, we get that \eqref{eq:bigk:termsbiggerthanx} is
\begin{align*}
&\ll \sum_{d|D_{\mcH}} \frac{\mu^2(d)(Ck^2)^{\omega(d)}}{\phi(d)} \left(\frac dx\right)^{\a} e^{Ck^{1 + \a}} = \frac{e^{Ck^{1+\a}}}{x^\a} \sum_{d|D_{\mcH}} \frac{\mu^2(d) (Ck^2)^{\omega(d)}d^{\a}}{\phi(d)}.
\end{align*}
For any $d \le D_{\mcH}$, we have that $\frac{d}{\phi(d)} \ll \log \log D_{\mcH}$, so that this expression becomes
\begin{align*}
&\ll \frac{e^{Ck^{1+\a}}}{x^\a} (\log \log D_{\mcH}) \sum_{d|D_{\mcH}} \frac{\mu^2(d) (Ck^2)^{\omega(d)}d^{\a}}{d} = \frac{e^{Ck^{1+\a}}}{x^\a}(\log \log D_{\mcH}) \prod_{p|D_{\mcH}} \left(1 + \frac{Ck^2}{p^{1-\a}}\right).
\end{align*}
Since $(1-\d)(1+\a) < 1$, $\frac{e^{Ck^{1+\a}}}{x^\a} \ll_\ep h^{\ep}/x^\a$. Moreover, the quantity $D_{\mcH}$ is at most $h^{\binom{k}{2}}$, since it is a product of $\binom{k}{2}$ quantities $h_i-h_j$, each of which are $<h$. Thus $\log \log D_{\mcH} \le \log \log h^{\binom{k}{2}} \ll \log \log h$, so in fact the product of all terms outside the product are $\ll_\ep h^{\ep}/x^\a$. It remains to understand the product, which is bounded by
\begin{align*}
\prod_{p|D_{\mcH}}\left(1 + \frac{Ck^2}{p^{1-\a}}\right) &\le \exp\left(\sum_{p|D_{\mcH}}\frac{Ck^2}{p^{1-\a}}\right) \\
&\le \exp\left(2Ck^2\sum_{p \le \binom{k}{2}\log h} \frac{1}{p^{1-\a}}\right).
\end{align*}
The sum over primes satisfies
\begin{equation*}
\sum_{p \le \binom k 2 \log h} \frac 1{p^{1-\a}} = \frac{\binom{k}{2}^{\a} (\log h)^{\a}}{\a \log(\binom k2 \log h)}(1+o(1)),
\end{equation*}
for example by applying partial summation and L'H\^opital's rule, so that
\begin{align*}
\exp\left(2Ck^2\sum_{p \le \binom{k}{2}\log h} \frac{1}{p^{1-\a}}\right) &\ll \exp\left(\frac{4Ck^2}{\a}\binom{k}{2}^{\a} \frac{(\log h)^{\a}}{\log \log h}\right) \\
&\ll \exp\left(\frac{4C}{2^{\a}\a} k^{2+2\a}\frac{(\log h)^{\a}}{\log\log h}\right) \\
&\ll \exp\left(\frac{4C}{2^{\a} \a} (\log h)^{(1-\d)(2+2\a)+\a}/\log \log h\right).
\end{align*}
Since $(1-\d)(2+2\a)+\a < 1$, this quantity is $\ll_{\ep} h^{\ep}$, so \eqref{eq:bigk:termsbiggerthanx} is $\ll_\ep h^{2\ep}/x^\a$, say. Set $x=h^{1/2}$, and choose $\ep > 0$ small enough that $2 \ep < \frac 12 \a$.

This is true for any set $\mathcal H = \{h_1, \dots, h_k\}$, so it follows that
\begin{equation}\label{eq:bigksums:qrexpansion}
T_k(h)= \subsum{q \le x \\ p|q \thus p > k} \subsum{r \ge 1 \\ p|r \thus p \le k} \subsum{h_1, \dots, h_k \le h \\ \text{distinct}} a_{\mcH}(qr)  + O\left(\frac{h^{2\ep}}{x^\a} \subsum{h_1, \dots, h_k \le h \\ \text{distinct}} \prod_{p \le k} \frac{1-\nu_{\mcH}(p)/p}{(1-1/p)^k}\right),
\end{equation}
where we have additionally expanded the terms of the product with $p \le k$ into the sum over $r$. First consider the error term in \eqref{eq:bigksums:qrexpansion}, which by \eqref{eq:bigksums:gallapboundssmallp} is
\begin{align*}
&\ll \frac{h^{2\ep}}{x^\a} h^k \prod_{p \le k} \left(e^{2k/p}\right) \\
&\ll \frac{h^{2\ep}}{x^\a} h^k e^{2k\sum_{p \le k} \tfrac 1p} \\
&\ll \frac{h^{2\ep}}{x^\a} h^k e^{O(k\log \log k)} \ll \frac{h^{2\ep}}{x^\a} h^k e^{(\log h)^{1-\d/2}} \ll \frac{h^{2\ep}}{x^\a} h^{k + o_{\d}(1)}.
\end{align*}

Now consider the main term. The sum over $h_1,\dots, h_k \le h$ in the main term of \eqref{eq:bigksums:qrexpansion} can also be written as
\begin{equation*}
\subsum{\vec{\nu} = (\nu_p)_{p|qr} \\ \nu_p \le \min\{p-1,k\}} \prod_{p|qr} a(p,\nu_p) \left( N(\vec{\nu})+ O(kh^{k-1})\right),
\end{equation*}
where $N(\vec{\nu})$ is the number of $k$-tuples of not necessarily distinct integers $h_1, \dots, h_k$ with $1 \le h_1, \dots, h_k \le h$ which occupy exactly $\nu_p$ residue classes mod $p$ for each $p|qr$. We can estimate $N(\vec{\nu})$ by counting for each $p|qr$ the number of $h_1, \dots, h_k \mod p$ that occupy exactly $\nu_p$ residue classes and applying the Chinese Remainder Theorem. Thus
\begin{equation*}
N(\vec{\nu}) = \prod_{p|qr} \binom{p}{\nu_p} \sigma(k,\nu_p) \left( \frac{h}{qr} + O(1)\right)^k,
\end{equation*}
where $\sigma(k,j)$ denotes the number of surjective maps $[1,k] \onto [1,j]$; we also have $\sigma(k,j) = j! \stirlingii{k}{j}$, where $\stirlingii{k}{j}$ is the Stirling number of the second kind. Expanding, we get
\begin{equation*}
N(\vec{\nu}) = \prod_{p|qr} \binom{p}{\nu_p} \sigma(k,\nu_p)\left(\left(\frac h{qr}\right)^k + O\left( \sum_{j=0}^{k-1} \left(\frac h{qr}\right)^j \binom kj\right)\right).
\end{equation*}

Since $x = h^{1/2}$ and $\prod_{p \le k} p = e^{O(k)} = e^{O((\log h)^{1-\d})} = h^{o(1)}$, for any $q \le k$ and $r$ with all prime factors $\le k$, 
\[\sum_{j=0}^{k-1} \left(\frac{h}{qr}\right)^j \binom kj \ll k^2 \left(\frac{h}{qr}\right)^{k-1}.\]
Thus the inner sum in the main term of \eqref{eq:bigksums:qrexpansion} is
\begin{equation}\label{eq:bigksums:ABCdecomposition}
\left(\frac h{qr} \right)^k A(qr) + O\left(k^2 \left(\frac{h}{qr}\right)^{k-1} B(qr) \right) + O(kh^{k-1} C(qr)),
\end{equation}
where
\begin{align*}
A(qr) &= \subsum{\vec{\nu} = (\nu_p)_{p|qr} \\ \nu_p \le \min\{p-1,k\}} \prod_{p|qr} a(p,\nu_p) \binom{p}{\nu_p} \sigma(k,\nu_p), \\
B(qr) &= \subsum{\vec{\nu} = (\nu_p)_{p|qr} \\ \nu_p \le \min\{p-1,k\}} \prod_{p|qr} |a(p,\nu_p)| \binom{p}{\nu_p} \sigma(k,\nu_p), \text{ and} \\
C(qr) &= \subsum{\vec{\nu} = (\nu_p)_{p|qr} \\ \nu_p \le \min\{p-1,k\}} \prod_{p|qr} |a(p,\nu_p)|.
\end{align*}
Just as in \cite{gallaghershortintervals}, $A(q) = 0$ for $q > 1$, and $A(1) = 1$.

Now consider $C(qr)$, which can be estimated using the bounds \eqref{eq:bigksums:gallapboundsbigp} and \eqref{eq:bigksums:gallapboundssmallp}
for $a(p,\nu)$. Write
\begin{equation*}
C(qr) = \prod_{p|qr} \left(\sum_{\nu = 1}^{\min\{p-1,k\}} |a(p,\nu)|\right).
\end{equation*}
If $p > k$, the $p$th factor is $\ll \frac{pk^2}{p-1}$, whereas if $p \le k$, this factor is $\ll pe^{2k/p}.$ Thus 
\begin{equation}\label{eq:bigksums:Cqr}
C(qr) \le (C_1k^2)^{\omega(q)} \frac{q}{\phi(q)} C_1^{\omega(r)}r e^{2k\sum_{p|r} 1/p},
\end{equation}
for some absolute constant $C_1$, where without loss of generality $C_1 \ge 1$.

After summing \eqref{eq:bigksums:Cqr} over all $q$ and $r$, the contribution to $T_k(h)$ from the factors coming from $C(qr)$ in \eqref{eq:bigksums:ABCdecomposition} is bounded by
\begin{align*}
&\ll kh^{k-1} \subsum{q \le x \\ p|q \thus p > k} \subsum{r \ge 1 \\ p|r \thus p \le k} (C_1k^2)^{\omega(q)} \frac{q}{\phi(q)} C_1^{\omega(r)}r e^{2k\sum_{p|r} 1/p} \\ 
&\ll kh^{k-1} \prod_{p \le k} \left(1 + C_1pe^{2k/p}\right) \subsum{q \le x \\ p|q \thus p > k} (C_1k^2)^{\omega(q)}\frac{q}{\phi(q)} \\ 
&\ll kh^{k-1} (2C_1)^{\pi(k)} e^{\sum_{p \le k} \log p} e^{2k\sum_{p \le k} \tfrac 1p}\subsum{q \le x \\ p|q \thus p > k} (C_1k^2)^{\omega(q)}\frac{q}{\phi(q)}. \numberthis \label{eq:Cqr-summed-over-q-r-almost-there}
\end{align*}
The terms outside the sum are $\ll h^{k-1+o(1)}$ in the range where $k \ll (\log h)^{1-\delta}$. We now examine the inside sum using Rankin's trick. First note that $\frac{q}{\phi(q)} \ll \log \log q \ll \log \log h$ for $q \le x = h^{1/2}$, so we may omit $\frac{q}{\phi(q)}$ from the sum while only losing a factor of $h^{o(1)}$. Thus for any $0 < \gamma < 2$,
\begin{align*}
\subsum{q \le x \\ p|q \thus p > k} (C_1k^2)^{\omega(q)}\frac{q}{\phi(q)} &\le h^{o(1)} \subsum{q \\ p|q \thus k < p \le x} (C_1k^2)^{\omega(q)} \left(\frac{x}{q}\right)^{2-\gamma} \\
&\ll h^{o(1)} x^{2-\gamma} \prod_{k < p \le x} \left(1 + \frac{C_1k^2}{p^{2-\gamma}}\right) \\
&\ll h^{o(1)} x^{2-\gamma} \mathrm{exp}\left(\sum_{k < p \le x} \frac{C_1k^2}{p^{2-\gamma}}\right) \\
&\ll h^{o(1)} x^{2-\gamma} \mathrm{exp} \left(\frac{C_2k^{1 + \gamma}}{\log k}\right),
\end{align*}
for a possibly different positive constant $C_2 > 0$. We can now choose any $\gamma >0$ such that $(1-\d)(1+\gamma) < 1$; for example, choose $\gamma = \a$. Then since $x = h^{1/2}$ and $k \ll (\log h)^{1-\delta}$,
\begin{equation}\label{eq:subsum-over-q-for-B-and-C}
\subsum{q \le x \\ p|q \thus p > k} (C_1k^2)^{\omega(q)}\frac{q}{\phi(q)} \ll h^{o(1)}x^{2-\a} \mathrm{exp}\left(\frac{C_2k^{1+\a}}{\log k}\right) \ll h^{1-\a/2 + o_\delta(1)}.
\end{equation}
Plugging \eqref{eq:subsum-over-q-for-B-and-C} into \eqref{eq:Cqr-summed-over-q-r-almost-there} shows that the contribution to $T_k(h)$ from the factors corresponding to $C(qr)$ is $\ll h^{k-\a/2+o_\delta(1)}$.

Finally, consider $B(qr)$. Just as with $C(qr)$, $B(qr)$ is multiplicative, and the $p$th factor of $B(qr)$ is given by
\[\sum_{\nu = 1}^{\min\{p-1,k\}} |a(p,\nu)| \binom p{\nu} \sigma(k,\nu), \]
which by \eqref{eq:bigksums:gallapboundssmallp}, \eqref{eq:bigksums:gallapboundsbigp}, and the fact that $\sum_{\nu = 1}^p \binom p{\nu} \sigma(k,\nu) = p^k,$ is 
\[\ll \begin{cases} \frac{k^2 p^k}{p-1} &\text{ if } p > k \\ e^{2k/p}p^k &\text{ if } p \le k. \end{cases}\]
Thus for some absolute constant $C_2 \ge 1$, and after summing over all $q$ and $r$, the contribution to $T_k(h)$ from the $B(qr)$ factors in \eqref{eq:bigksums:ABCdecomposition} is 
\begin{align*}
&\ll \subsum{q \le x \\ p|q \thus p > k} \subsum{r \ge 1 \\ p|r \thus p \le k} k^2 \left(\frac{h}{qr}\right)^{k-1} \frac{(C_2k^2)^{\omega(q)} q^k}{\phi(q)} C_2^{\omega(r)} r^k e^{2k\sum_{p|r} \tfrac 1p} \\
&\ll k^2 h^{k-1} \subsum{q \le x \\ p|q \thus p > k} \frac{(C_2k^2)^{\omega(q)} q}{\phi(q)} \prod_{p \le k} \left(1 + C_2e^{2k/p} p \right). 
\end{align*}
For $k \ll (\log h)^{1-\d}$, the product over $p \le k$ is 
\[\prod_{p \le k} \left(1+C_2e^{2k/p}p\right) \ll (2C_2)^k \prod_{p \le k} e^{2k/p}p = (2C_2)^k \exp\left(\sum_{p \le k} \frac{2k}p + \log p\right) \ll (2C_2)^k e^{3k\log\log k} = h^{o_{\d}(1)},\]
so the overall sum is
\begin{align*}
&\ll k^2 h^{k-1 + o_{\d}(1)} \subsum{q \le x \\ p|q \thus p > k} \frac{(C_2k^2)^{\omega(q)} q}{\phi(q)}.
\end{align*} 
Applying \eqref{eq:subsum-over-q-for-B-and-C} with the same choice of $\gamma$ shows that the contribution to $T_k(h)$ from the $B(qr)$ factors is $\ll h^{k-\alpha/2 + o_\delta(1)}$.

Combining the contributions from $A(1)$, the sums over $q$ and $r$ of $B(qr)$ and $C(qr)$, and the error term in \eqref{eq:bigksums:qrexpansion}, we get that
\[T_k(h) = h^k + O\left(h^{k-\alpha/2 + o_{\d}(1)} + \frac{h^{2\ep}}{x^\a} h^{k + o_{\d}(1)}\right), \]
which for $x = h^{1/2}$ is $h^k + O(h^{k-\beta})$ for $\beta < \frac 12 \a - 2 \ep$. Our choice of $\alpha$ depends only on $\delta$, so $\beta$ also depends only on $\delta$, as desired.
\end{proof}

The techniques relying on the Chinese Remainder Theorem break down for larger $k$, where they do not give a bound with the correct power of $h$. We will now turn to Theorem \ref{thm:bigk:sumboundallk}, which bounds $T_k(h)$ for any $k$ where the dependence on $h$ is $h^k$, which is approximately the number of terms in the sum. In other words, Theorem \ref{thm:bigk:sumboundallk} provides a bound that is uniform in $h$ on the average value of $k$-term singular series for sets with elements that are at most $h$. The proof of Theorem \ref{thm:bigk:sumboundallk} relies on Lemma \ref{lem:amgmbound}, which is a uniform bound on $\mathfrak S(\mathcal H)$ for $\mathcal H \subset [1,h]$ satisfying the conditions of Theorem \ref{thm:bigk:sumboundallk}.

\begin{lemma}\label{lem:amgmbound}
Let $\mathcal H=\{h_1, \dots,h_k\}$ be a set of distinct integers. Define $\mathfrak S(\mathcal H)$ as in \eqref{eq:bg:singseries}. Then
\begin{equation}
\mathfrak S(\mathcal H) \ll \prod_{p \le k^3} \frac{1}{(1-1/p)^k}\prod_{p > k^3} \frac{1-k/p}{(1-1/p)^k} \binom{k}{2}^{-1}\sum_{1 \le i < j \le k} \exp \Big(2 \binom k2 \sum_{\substack{p|(h_i-h_j) \\ p > k^3}} \frac 1p \Big).
\end{equation}
\end{lemma}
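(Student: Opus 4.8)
The plan is to estimate $\S(\mcH)$ one Euler factor at a time, splitting the product at $p = k^3$, and then to convert a product over the pairs $\{h_i,h_j\}$ into a sum by an AM--GM inequality --- this last step is what produces the $\binom{k}{2}$ appearing in the statement. Throughout I assume $k \ge 2$, since for $k=1$ the right-hand side is degenerate.

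For $p \le k^3$ I would simply discard the numerator: since $\nu_{\mcH}(p) \ge 1$ one has $1 - \nu_{\mcH}(p)/p \le 1$, so
\[
\prod_{p \le k^3} \frac{1 - \nu_{\mcH}(p)/p}{(1-1/p)^k} \le \prod_{p \le k^3} \frac{1}{(1-1/p)^k},
\]
which is the first product in the lemma. For $p > k^3$ I would invoke Gallagher's observation, already recalled in the proof of Theorem~\ref{thm:bigksums:extendinggallagher}, that $\nu_{\mcH}(p) = k$ unless $p \mid D_{\mcH} = \prod_{i<j}(h_i-h_j)$, so that
\[
\prod_{p > k^3} \frac{1 - \nu_{\mcH}(p)/p}{(1-1/p)^k} = \Big(\prod_{p > k^3} \frac{1 - k/p}{(1-1/p)^k}\Big)\prod_{\substack{p > k^3 \\ p \mid D_{\mcH}}} \frac{p - \nu_{\mcH}(p)}{p - k}.
\]
The first product on the right is exactly the second product in the lemma, so the task reduces to bounding the remaining product over $p \mid D_{\mcH}$.

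The key elementary input is the inequality $\nu_{\mcH}(p) \ge k - c(p)$, where $c(p) := \#\{(i,j) : i < j,\ p \mid h_i - h_j\}$: if the $h_i$ occupy residue classes mod $p$ with multiplicities $m_1,\dots,m_{\nu_{\mcH}(p)}$, then $c(p) = \sum_a \binom{m_a}{2} \ge \sum_a (m_a - 1) = k - \nu_{\mcH}(p)$. Since $p > k^3 > 2k$ gives $p - k > p/2$, the inequality $1+t \le e^t$ then yields $\frac{p - \nu_{\mcH}(p)}{p - k} \le 1 + \frac{c(p)}{p - k} \le \exp(2c(p)/p)$. Multiplying over the relevant primes and regrouping the double sum via $\sum_p c(p)/p = \sum_{i<j} s_{ij}$, with $s_{ij} := \sum_{p \mid h_i - h_j,\ p > k^3} 1/p$, gives $\prod_{p > k^3,\ p \mid D_{\mcH}} \frac{p - \nu_{\mcH}(p)}{p - k} \le \prod_{i<j} \exp(2 s_{ij})$.

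Finally, writing $N = \binom{k}{2}$ for the number of pairs and each factor as $\exp(2 s_{ij}) = \big(\exp(2N s_{ij})\big)^{1/N}$, the AM--GM inequality applied to the $N$ nonnegative numbers $\exp(2N s_{ij})$ gives $\prod_{i<j} \exp(2 s_{ij}) = \big(\prod_{i<j}\exp(2Ns_{ij})\big)^{1/N} \le \frac{1}{N} \sum_{i<j} \exp(2N s_{ij})$, which is precisely the factor $\binom{k}{2}^{-1} \sum_{i<j} \exp\big(2\binom{k}{2} s_{ij}\big)$ in the statement. Assembling the three pieces finishes the proof. I do not anticipate a serious obstacle; the only points needing a little care are verifying $\nu_{\mcH}(p) \ge k - c(p)$ relating residue-class counts to the divisor data of $D_{\mcH}$, and arranging the bookkeeping so the AM--GM step produces exactly $\binom{k}{2}$ rather than, say, $k$ inside the exponential.
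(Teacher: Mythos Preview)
Your proposal is correct and follows essentially the same approach as the paper: the same splitting at $p = k^3$, the same reduction to bounding $\prod_{p \mid D_{\mcH},\, p > k^3} \frac{p-\nu_{\mcH}(p)}{p-k}$, and the same use of $k - \nu_{\mcH}(p) \le c(p)$ together with $p - k \ge p/2$. The only cosmetic difference is that you phrase the final step as AM--GM on the numbers $\exp(2Ns_{ij})$ whereas the paper invokes Jensen's inequality for the convex function $\exp$; these are of course the same inequality.
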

\begin{proof}
By definition,
\begin{align}
\mathfrak S(\mathcal H) &= \prod_{p \text{ prime}} \frac{1-\nu_{\mathcal H}(p)/p}{(1-1/p)^k} \nonumber \\
&\le \prod_{\substack{p \text{ prime} \\ p \le k^3}} \frac 1{(1-1/p)^k} \prod_{\substack{p \text{ prime} \\ p > k^3}} \frac{1-\nu_{\mathcal H}(p)/p}{(1-1/p)^k} \nonumber \\
&= \prod_{p \le k^3} \frac{1}{(1-1/p)^k}\prod_{p > k^3} \frac{1-k/p}{(1-1/p)^k}\prod_{\substack{p|\Delta(\mathcal H) \\ p > k^3}} \frac{p-\nu_{\mathcal H}(p)}{p-k}. \label{eq:splitproductintoprimeregions}
\end{align}
Rewrite the product inside via
\begin{align*}
\prod_{\substack{p|\Delta(\mathcal H) \\ p > k^3}} \frac{p-\nu_{\mathcal H}(p)}{p-k} &\le \exp\Big(\sum_{\substack{p|\Delta(\mathcal H) \\ p > k^3}} \frac{p-\nu_{\mathcal H}(p)}{p-k}\Big) \\ 
&\ll \exp\Big(2 \sum_{1 \le i < j\le k} \sum_{\substack{p|(h_i-h_j) \\ p > k^3}} \frac 1p \Big)  \\ 
&\le \binom{k}{2}^{-1}\sum_{1 \le i < j \le k} \exp \Big(2 \binom k2 \sum_{\substack{p|(h_i-h_j) \\ p > k^3}} \frac 1p \Big),
\end{align*}
where the last step comes from Jensen's inequality; plugging this into \eqref{eq:splitproductintoprimeregions} yields the result.
\end{proof}

With Lemma \ref{lem:amgmbound} in hand, we now turn to the proof of Theorem \ref{thm:bigk:sumboundallk}.

\begin{proof}[Proof of Theorem \ref{thm:bigk:sumboundallk}]
By Lemma \ref{lem:amgmbound}, for each set $\mathcal H = \{h_1, \dots, h_k\}$ with $h_1, \dots, h_k \le h$ distinct,
\[\mathfrak S(\mathcal H) \ll \prod_{p \le k^3} \frac{1}{(1-1/p)^k}\prod_{p > k^3} \frac{1-k/p}{(1-1/p)^k} \binom{k}{2}^{-1}\sum_{1 \le i < j \le k} \exp \Big(2 \binom k2 \sum_{\substack{p|(h_i-h_j) \\ p > k^3}} \frac 1p \Big).\]
Sum over $\mathcal H$ to get
\begin{equation*}
T_k(h) \ll  \prod_{p \le k^3} \frac{1}{(1-1/p)^k}\prod_{p > k^3} \frac{1-k/p}{(1-1/p)^k} \subsum{1\le h_1, \dots, h_k \le h \\ \text{distinct}} \binom{k}{2}^{-1}\sum_{1 \le i < j \le k} \exp \Big(2 \binom k2 \sum_{\substack{p|(h_i-h_j) \\ p > k^3}} \frac 1p \Big).
\end{equation*}
Let $S_k(h)$ refer to the sums above, so that
\begin{equation*}
S_k(h) := \subsum{1\le h_1, \dots, h_k \le h \\ \text{distinct}}\binom{k}{2}^{-1} \sum_{1 \le i < j \le k} \exp \Big(2 \binom k2 \sum_{\substack{p|(h_i-h_j) \\ p > k^3}} \frac 1p \Big).
\end{equation*}
Then
\begin{align*}
S_k(h)&\ll \binom{k}{2}^{-1} \sum_{1 \le i < j \le k} \subsum{1\le h_1, \dots, h_k \le h \\ \text{distinct}} \exp \Big(2\binom k2 \sum_{\substack{p|(h_i-h_j)\\p>k^3}} \frac 1p \Big)  \\
&\ll \binom{k}{2}^{-1} \sum_{1 \le i < j \le k} \sum_{\substack{h_i,h_j \le h \\ \text{distinct}}} \exp\Big(2\binom k2 \sum_{\substack{p|(h_i-h_j) \\ p > k^3}} \frac 1p \Big)h^{k-2} \\
&= h^{k-2} \sum_{\substack{\ell \le h}} (h-\ell)\exp\Big(2\binom k2 \sum_{\substack{p|\ell \\ p > k^3}} \frac 1p \Big)\\
&\le h^{k-1} \sum_{\substack{\ell \le h}} \exp\Big(2\binom k2 \sum_{\substack{p|\ell \\ p > k^3}} \frac 1p \Big).
\end{align*}

The sum over $\ell$ is a sum over a multiplicative function $f_k(\ell)$, with $f_k(p^j) = 1$ if $p \le k^3$ and $f_k(p^j) = \exp(k(k-1)/p)$ if $p>k^3$, regardless of $j$. The function $f_k(\ell)$ satisfies
\begin{equation*}
f_k(\ell) = \sum_{d|\ell} g_k(d),
\end{equation*}
where $g_k$ is a multiplicative function given by $g_k(p^j) = 0$ if $p \le k^3 $ or $j \ge 2$ and $g_k(p) = \exp(k(k-1)/p)-1$ for $p > k^3$ prime. Then
\begin{equation*}
\sum_{\ell \le h} f_k(\ell) = \sum_{\ell \le h} \sum_{d|\ell} g_k(d) 
= \sum_{d\le h} g_k(d)\left\lfloor \frac hd\right\rfloor
\le h\sum_{d=1}^\infty \frac{g_k(d)}{d}.
\end{equation*}
The sum over $d$ can be rewritten as
\begin{align*}
\prod_{p > k^3} \Big(1 + \frac{\exp\left(\tfrac{k(k-1)}{p}\right)-1}{p}\Big) &= \exp\Big(\sum_{\substack{p \text{ prime}\\p>k^3}} \sum_{j\ge 1} \frac 1p \left(\frac{k(k-1)}{p}\right)^j\Big) \\
&< \exp\Big(\sum_{\substack{p \text{ prime} \\ p>k^3}} \sum_{j \ge 1} \frac 1{p^{1+j/3}}\Big). 
\end{align*}
The sum in the exponent is bounded by a constant independent of $k$, and thus the sum over $d$ is bounded by a constant independent of $k$, so that $S_k(h) \ll h^k$.

Finally, return to the contribution from the small primes and $T_k(h)$, which is bounded by
\begin{equation*}
T_k(h) \ll h^k \prod_{p \le k^3} \frac 1{(1-1/p)^k} \ll h^k (3\log k)^k,
\end{equation*}
as desired.
\end{proof}

\section{Proof of Theorem \ref{thm:distributiontail:momentboundsmallk} and its corollaries}\label{sec:distributiontail:HLbounds}

Throughout, consider an interval of size $h = \l \log x$. Fix $\d > \frac 12$ and assume that $r \ll (\log h)^{1-\d}$. We begin with the proof of Theorem \ref{thm:distributiontail:momentboundsmallk}.

The $r$th moment $m_r(x,h)$, defined in \eqref{eq:rmomentlogdef}, is given by
\begin{align*}
m_r(x,h) &= \subsum{1 \le h_1, \dots, h_r \le h} \frac 1x \sum_{\substack{n \le x}} \mathbf 1_{\mathcal P}(n+h_1) \cdots \mathbf 1_{\mathcal P}(n+h_r) \\
&= \sum_{\ell=1}^r \frac{\sigma(r,\ell)}{\ell!} \subsum{1 \le h_1, \dots, h_\ell \le h \\ \text{distinct}} \frac 1x \sum_{n \le x} \mathbf 1_{\mathcal P}(n+h_1) \cdots \mathbf 1_{\mathcal P}(n+h_\ell),
\end{align*}
where $\sigma(r,\ell)$ is the number of surjective maps $[1,r] \onto [1,\ell]$, and $\frac{\sigma(r,\ell)}{\ell!} = \stirlingii{r}{\ell}$, the Stirling number of the second kind.

Apply Conjecture \ref{conj:bg:HLuniform} to replace the sum over correlations of primes with a sum over singular series, yielding
\begin{equation*}
m_r(x,h) = \sum_{\ell=1}^r \stirlingii{r}{\ell} \frac 1{(\log x)^\ell} \subsum{1 \le h_1, \cdots, h_\ell \le h \\ \text{distinct}} (\mathfrak S(\{h_1, \dots, h_\ell\}) + o(1)).
\end{equation*}
Estimating this moment now depends on the average of the singular series constants, and in particular how quickly this average converges to $1$. We apply our results from Section \ref{sec:bigksums:gallagher} bounding sums of singular series for large sets, and in particular Theorem \ref{thm:bigksums:extendinggallagher}, which requires our assumption that $r \ll (\log h)^{1-\d}$. For larger $r$, one could also apply the weaker result in Theorem \ref{thm:bigk:sumboundallk} to yield a weaker moment bound.

By Theorem \ref{thm:bigksums:extendinggallagher}, for any $\ell \le r \ll (\log h)^{1-\d}$ and for some $\b > 0$ dependent only on $\d > \frac 12$, 
\begin{equation*}
\sum_{\substack{1 \le h_1, \cdots, h_\ell \le h \\ \text{distinct}}} \mathfrak S(h_1, \dots, h_\ell) =  h^\ell + O(h^{\ell-\beta}).
\end{equation*}
Then for any $r \ll (\log h)^{1-\d}$,
\begin{align*}
m_r(x,h) &= \sum_{\ell = 1}^r \stirlingii{r}{\ell} \frac 1{(\log x)^\ell}\left(h^\ell  +  O(h^{\ell-\b})\right) + o\left(\sum_{\ell = 1}^r \frac 1{(\log x)^\ell}\stirlingii{r}{\ell} h^\ell\right) \\
&=  \left(\sum_{\ell = 1}^r \stirlingii{r}{\ell} \l^\ell\right)(1 + o(1)),
\end{align*}
where the error term is uniform in $r$. This completes the proof of Theorem \ref{thm:distributiontail:momentboundsmallk}. We now proceed to prove Corollary \ref{cor:distributiontail:smallktailbound}.

\begin{proof}[Proof of Corollary \ref{cor:distributiontail:smallktailbound}]
Let $r \ll (\log h)^{1-\d}$. Applying a Markov bound to the $r$th moment $m_r(x,h)$, we get that
\begin{equation*}
I(x;k,h) \le \frac{x}{k^r} m_r(x,h)
\ll \frac{x}{k^r} \sum_{\ell = 1}^r \stirlingii{r}{\ell}\l^\ell.
\end{equation*}
As shown in \cite[Theorem 3]{MR241310}, Stirling numbers of the second kind are bounded above by $\stirlingii{r}{\ell} \le \frac 12 \binom r{\ell} \ell^{r-\ell},$ so
\begin{equation*}
I(x;k,h) \ll \frac{x}{k^r} \sum_{\ell = 1}^r  \binom r{\ell} \ell^{r-\ell} \l^\ell \ll \frac{x}{k^r} (\l + r)^r. 
\end{equation*}
If $\l \ge 1$, then as $x \to \infty$ eventually $\frac{k}{\l} \ge \frac{\l e}{\l - 1}$, which in turn implies that we can choose $r = \frac{k}{\l e}$ and get $(\l + r)^r \le (\l r)^r$. With this choice of $r$, we thus get $I(x;k,h)\ll xe^{-k/\l e}$, as desired.

Meanwhile if $\l < 1$, we can choose $r = \frac{k}{(\l + 1)e}$ to get the desired result, since $(\lambda+r)^r \le ((\lambda+1)r)^r$.
\end{proof}

Corollary \ref{cor:distributiontail:biggerktailbound} follows via the same argument as the proof of Corollary \ref{cor:distributiontail:smallktailbound}, but where $r$ is taken to be $(\log h)^{1-\d}$.

\section{Unconditional bounds}\label{sec:distributiontail:unconditionalbounds} 

We can also achieve weaker unconditional bounds on the moments $m_r(x, h)$ and the tail of the distribution via replacing the use of the Hardy--Littlewood conjectures by an application of the Selberg sieve. More precisely, we will make use of the following theorem, which is proven in Section \ref{sec:distributiontail:selbergsievethm}.
\begin{theorem}\label{thm:distributiontail:selbergsievethm}
Let $x \ge 2$, let $k = o((\log x)^{1/4})$, and let $\mathcal H = \{h_1, \dots, h_k\}$ be a set of $k$ distinct natural numbers. For any $\ep > 0$, 
\begin{align*}
&\left|\left\{n \le x: n + h_i \text{ prime for all } i \right\}\right| \\
&\le (2+\ep)^k k! \mathfrak S(\mcH) \frac{x}{\log^k x} \left(1 + O\left(\frac{\log\log(3x) + k^4 + k\log\log(3|D_{\mcH}|)}{\log x}\right) \right),
\end{align*}
where $D_{\mcH} := \prod_{i < j} (h_i - h_j)$. 
\end{theorem}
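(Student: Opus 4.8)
The plan is to apply the fundamental lemma of the Selberg sieve (in the form of a Selberg upper bound sieve with level of distribution a small power of $x$) to the set $\{n \le x : n + h_i \text{ prime for all } i\}$, treating the polynomial $P(n) = \prod_{i=1}^k (n+h_i)$ and sieving out small primes. First I would recall the standard Selberg sieve setup: for a squarefree modulus $d$, the number of $n \le x$ with $P(n) \equiv 0 \pmod d$ is $\frac{\rho(d)}{d}x + O(\rho(d))$, where $\rho(d) = \prod_{p \mid d}\nu_{\mcH}(p)$ is multiplicative and $\rho(p) = \nu_{\mcH}(p) = k$ for all $p \nmid D_{\mcH}$. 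The Selberg sieve then yields an upper bound of the shape $\frac{x}{G(z)} + (\text{error})$, where $G(z) = \sum_{d \le z, \, d \text{ squarefree}} \prod_{p \mid d} \frac{\rho(p)}{p - \rho(p)}$ is the usual sum over the sieve weights with $z$ the square root of the level of distribution, and the error term is controlled by $\sum_{d_1, d_2 \le z} \rho([d_1,d_2])$.

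The heart of the matter is to show that $\frac{1}{G(z)} \le (2+\ep)^k k! \, \mathfrak S(\mcH)\, (\log x)^{-k}(1 + (\text{stated error}))$. For this I would compare $G(z)$ to an Euler product: since $\rho(p)/(p - \rho(p)) = k/(p-k)$ away from $p \mid D_{\mcH}$, one expects $G(z) \gg (\log z)^k / (k! \, \mathfrak S(\mcH))$ up to constants, by a Rankin–Selberg/Iwaniec-style estimate for sums of multiplicative functions whose average is $k/p$. Concretely, I would write $\sum_{d} \rho_1(d) \ge \prod_{p \le z}(1 + \rho(p)/(p-\rho(p)))$ restricted appropriately, factor out $\prod_{p \le z} (1-1/p)^{-k}$ to produce $\mathfrak S$ (up to the tail over $p > z$, which contributes the convergent factor absorbed into the implied constant once $z$ is a power of $x$ and $k = o((\log x)^{1/3})$), and bound $\prod_{p \le z}\frac{1}{1-1/p} \asymp \log z \asymp \log x$. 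The factor $2^k$ (and the $\ep$) comes from the slack between the truncated Selberg sum $G(z)$ and the full Euler product: a single prime $p$ contributes $1 + \frac{k}{p-k}$ versus the "ideal" $(1-1/p)^{-k} = 1 + k/p + O(k^2/p^2)$, and summing the discrepancy $\sum_{p}\bigl(\log(1 + \tfrac{k}{p-k}) - \tfrac{k}{p}\bigr)$ over $p \le k^{O(1)}$ produces a $\log$ of size $O(k\log\log k)$, which is where an extra $e^{O(k\log\log k)}$ — crucially $\le (1+\ep)^k k!$ after comparison with Stirling — would be handled; I would need to be careful to land exactly at $(2+\ep)^k k!$ rather than a worse constant, which may require choosing $z = x^{1/(2+\ep)^{?}}$... more precisely picking the level of distribution so that $z$ is a small power of $x$ but large enough that the main term dominates.

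The main obstacle I expect is twofold. First, tracking the dependence on $k$ through every step: the remainder term in the Selberg sieve is $\sum_{d_1,d_2 \le z} \mu^2(d_1)\mu^2(d_2)\rho([d_1,d_2]) \le \bigl(\sum_{d \le z} \mu^2(d) 3^{\omega(d)} \rho(d)\bigr)$-type bound, which is roughly $z^2 (\log z)^{O(k)}$, and one must check that with $k = o((\log x)^{1/3})$ and $z$ a suitable power of $x$ this is dominated by $\frac{x}{G(z)} \cdot \frac{1}{\log x}$, producing the $\frac{k^3 + \cdots}{\log x}$ error; the $k^3$ almost certainly arises as $k \cdot \omega$-type losses, i.e. $(\log\log)^{?}$ powers times $k$, exponentiated. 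Second, getting the precise constant $(2+\ep)^k$ — as opposed to $C^k$ for some unspecified $C$ — requires the sharp lower bound $G(z) \ge (1+o(1))^k \frac{(\log z)^k}{k! \,\mathfrak S(\mcH)}$ with the loss concentrated in an honest $2^k$, which means I would use the standard trick of bounding $G(z) \ge \prod_{p\le z^{1/k}}\bigl(1 + \tfrac{k}{p-k}\bigr)$ or a similar sub-product and invoking Mertens, accepting the factor $2^k$ as the price of truncating at $z^{1/k}$ rather than $z$. I would then assemble: upper bound $=\frac{x}{G(z)}\bigl(1 + O(\text{error})\bigr) \le (2+\ep)^k k! \,\mathfrak S(\mcH)\frac{x}{\log^k x}\bigl(1 + O(\tfrac{\log\log 3x + k^3 + k\log\log 3|D_{\mcH}|}{\log x})\bigr)$, with the $k\log\log|D_{\mcH}|$ term coming from the primes dividing $D_{\mcH}$ where $\rho(p) < k$ and the Euler factor comparison is lossy.
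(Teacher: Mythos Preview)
Your overall framework matches the paper: apply the Selberg upper-bound sieve to $\mathcal A = \{\prod_i (n+h_i) : n \le x\}$, estimate the main term $x/G(z)$, control the remainder, and choose $z$ to be a power of $x$. The paper does exactly this by citing Halberstam--Richert, verifying their conditions $(R)$, $(\Omega_1)$, $(\Omega_2(\kappa,L))$ with $\kappa = k$, $\a_1 = k+1$, $\a_2 = O(k)$, $L = O(k\log\log 3|D_{\mcH}|)$, and using their Theorem~3.1 in the form $S(\mathcal A;\mathcal P,z) \le x/G(z) + z^2/W^3(z)$ with $W(z) = \prod_{p<z}(1 - \nu_{\mcH}(p)/p)$.

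However, you have misidentified the source of both constants $(2+\ep)^k$ and $k!$, and your proposed route to them would not give the stated bound. The paper (tracing $k$-dependence through Halberstam--Richert's Lemma~5.4) proves the \emph{asymptotic}
\[
\frac{1}{G(z)} = W(z)\,e^{\gamma k}\,\Gamma(k+1)\left(1 + O\!\left(\frac{L + k^3}{\log z}\right)\right),
\]
together with $W(z) = \mathfrak S(\mcH)\, e^{-\gamma k}(\log z)^{-k}\bigl(1 + O((L+k^3)/\log z)\bigr)$. Thus $1/G(z) \sim k!\,\mathfrak S(\mcH)(\log z)^{-k}$ with \emph{no} further constant loss; the factor $\Gamma(k+1)=k!$ is intrinsic to this asymptotic and comes from the integral/Tauberian argument in Halberstam--Richert, not from any crude Euler-product truncation. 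The factor $(2+\ep)^k$ then arises \emph{solely} from the choice $z = x^{1/(2+\ep)}$, since $(\log z)^{-k} = (2+\ep)^k(\log x)^{-k}$; this choice of $z$ is what makes the remainder $z^2/W^3(z) \ll z^2 e^{O(k^3)}(\log z)^{3k}$ negligible compared to the main term when $k = o((\log x)^{1/3})$. Your proposal to bound $G(z)$ below via a subproduct over $p \le z^{1/k}$ would at best give $G(z) \gg (\log z^{1/k})^k/\mathfrak S(\mcH) = k^{-k}(\log z)^k/\mathfrak S(\mcH)$, losing a factor $k^k/k! \sim e^k$ and landing near $(2e)^k$ rather than $(2+\ep)^k$. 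The work you are missing is precisely the sharp evaluation of $G(z)$ with tracked $k$-dependence; once you have it, the $k^3$ and $k\log\log 3|D_{\mcH}|$ in the error term fall out directly from $L$ and from the bound $1/W(z) \ll e^{O(k^3)}(\log z)^k$.
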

Theorem \ref{thm:distributiontail:selbergsievethm} extends the work of Klimov in \cite{MR0097372}, who shows an analogous bound for $k$ fixed as $x \to \infty$. 

We now turn to the proofs of Theorem \ref{thm:distributiontail:unconditionalmomentbound}, as well as that of Corollary \ref{cor:distributiontailuncondtionalbound}.
\begin{proof}[Proof of Theorem \ref{thm:distributiontail:unconditionalmomentbound}]
As in the proof of Theorem \ref{thm:distributiontail:momentboundsmallk}, we have
\begin{equation*}
m_r(x,h) = \sum_{\ell=1}^r \frac{\sigma(r,\ell)}{\ell!} \subsum{1 \le h_1, \cdots, h_\ell \le h \\ \text{distinct}} \frac 1x \sum_{n \le x} \mathbf 1_{\mathcal P}(n+h_1) \cdots \mathbf 1_{\mathcal P}(n+h_\ell).
\end{equation*}
For our choice of $h$ and $r$, the error term in Theorem \ref{thm:distributiontail:selbergsievethm} is $O(1)$. Applying Theorem \ref{thm:distributiontail:selbergsievethm}, the $r$th moment is then bounded by
\begin{align*}
m_r(x,h) &\ll \sum_{\ell=1}^r \frac{\sigma(r,\ell)}{\ell!(\log x)^\ell} \subsum{1 \le h_1, \cdots, h_\ell \le h \\ \text{distinct}} (2+\ep)^\ell \ell! \mathfrak S(\mcH) \\
&\ll \sum_{\ell=1}^r \stirlingii{r}{\ell} \ell! \frac{1}{(\log x)^\ell} (2+\ep)^\ell  h^\ell e^{O(\ell \log \log \ell)},
\end{align*}
where the last step follows by applying Theorem \ref{thm:bigk:sumboundallk}. Since $h = \l\log x$, this sum is then
\begin{align*}
&\ll r! (2 + \ep)^r e^{O(r\log\log r)} \sum_{\ell = 1}^r \stirlingii{r}{\ell} \lambda^\ell.
\end{align*}
As seen in the proof of Corollary \ref{cor:distributiontail:smallktailbound}, $\sum_{\ell = 1}^r \stirlingii{r}{\ell} \lambda^\ell \le (\lambda+r)^r\le ((\l+1)r)^r,$ so that
\begin{align*}
m_r(x,h) &\ll r! (2+\ep)^r e^{O(r\log \log r)}(\l+1)^r r^r \\
&\ll r^{2r} e^{O(r\log \log r)} (\l+1)^r,
\end{align*}
which gives the result.
\end{proof}

\begin{proof}[Proof of Corollary \ref{cor:distributiontailuncondtionalbound}]
The proof of this corollary proceeds along the same lines as the proofs of Corollaries \ref{cor:distributiontail:smallktailbound} and \ref{cor:distributiontail:biggerktailbound}. In this case, we know unconditionally from Theorem \ref{thm:distributiontail:unconditionalmomentbound} that 
\begin{equation*}
I(x;k,h) \le \frac{x}{k^r} m_r(x,h) \ll \frac{x}{k^r}(\l+1)^r r^{2r} e^{O(r\log\log r)}.
\end{equation*}
Hence there exists some constant $C > 0$ with 
\begin{equation*}
I(x;k,h) \ll \frac{x}{k^r} (\l+1)^r r^{2r} e^{Cr\log\log r} = x\exp(r\log \frac{\l+1}{k} + 2r\log r + Cr\log\log r).
\end{equation*}
Choose $r = \left(\frac{k}{(\l+1) e}\right)^{1/2}2^{C/2}\left(\log \frac{k}{(\l+1) e}\right)^{-C/2}$, so that 
\begin{align*}
\log r &= \frac 12 \log \frac{k}{(\l +1)e} +\frac C2 \log 2- \frac C2 \log \log  \frac{k}{(\l+1)e}, \text{ and} \\
\log \log r &= \log \frac 12 + \log (\log \frac{k}{(\l+1)e} - \frac C2 \log \log \frac{k}{(\l +1)e} + C\log 2) \\
&\le \log \frac 12 + \log \log \frac{k}{(\l + 1)e},
\end{align*}
where the inequality holds for large enough $x$ since $\frac{k}{\l} \to \infty$. Plugging in these expressions for $\log r$ and $\log \log r$ gives
\begin{align*}
r&\log \frac{\l +1}{k} + 2r\log r + Cr\log \log r \le -r,
\end{align*}
so that $I(x;k,h) \ll x e^{-r}$, which completes the proof.
\end{proof}

\subsection{Selberg's Sieve: Proof of Theorem \ref{thm:distributiontail:selbergsievethm}} \label{sec:distributiontail:selbergsievethm}

Selberg's sieve has previously been used to bound the frequency of prime $k$-tuples; see for example \cite{MR0424730}, which we will refer to throughout this section, as well as \cite{MR2647984} and \cite{MR0097372}. In \cite{MR0424730}, Halberstam and Richert proceed along a very similar calculation, with the only material difference being that we are not taking $k$ to be a constant in terms of the other parameters, and thus we keep track of the dependence on $k$ throughout. We proceed along the lines of \cite[Theorem 5.7]{MR0424730}. To do so, there are several lemmas that we will want to adapt to this setting.

We begin by defining notation. Let $\mathcal P$ be the set of all primes, and for $z > 0$, let $P(z) := \prod_{p \le z} p$. Let $\mathcal H = \{h_1, \dots, h_k\}$ be a set of $k$ distinct natural numbers, so that $D_{\mcH} \ne 0$.
Define
\begin{equation*}
\mathcal A := \left\{ \prod_{i=1}^k (n + h_i) : n \le x \right\},
\end{equation*}
and define $A_p$ to be the number of elements of $\mathcal A$ that are divisible by a prime $p$, with $A_p = \frac{\nu_{\mcH}(p)}{p}x + O(\nu_{\mcH}(p)).$ Let $A_d$ be the number of elements of $\mathcal A$ that are divisible by $d$, so that $A_d = \frac{\nu_{\mcH}(d)}{d} x + O(\nu_{\mcH}(d)),$ where $\nu_{\mcH}(d) = \prod_{p|d} \nu_{\mcH}(p)$. Let $R_d = A_d - \frac{\nu_{\mcH}(d)}{d} x$, so that $|R_d| \le \nu_{\mcH}(d)$. Our goal is to estimate the quantity
\begin{equation}\label{eq:distributiontail:selberg:SAPzdef}
S(\mathcal A;\mathcal P,z):=|\{a:a \in \mathcal A, (a,P(z)) = 1\}|.
\end{equation}

Halberstam and Richert define three conditions on a sieve problem in order to apply the Selberg sieve, which they denote ($R$), ($\Omega_1$), and $(\Omega_2(\kappa,L))$, where the parameter $\kappa$ is the dimension of the sieve, which in this case is equal to $k$. The three conditions are:
\begin{align}
|R_d| &\le \nu_{\mcH}(d) \text{ if } \mu(d) \ne 0; \tag{$R$} \\
0 &\le \frac{\nu_{\mcH}(p)}{p} \le 1 - \frac 1{\a_1} \text{ for some constant $\a_1 \ge 1$}; \tag{$\Omega_1$} \\
-L &\le \sum_{w \le p < z} \frac{\nu_{\mcH}(p) \log p}{p} - \kappa \log \frac zw \le \a_2 \text{ for any } z\ge w \ge 2. \tag{$\Omega_2(\kappa,L)$}
\end{align}
For the final condition, $\a_2$ and $L$ are constants, each $\ge 1$, which are independent of $z$ and $w$. For our purposes, we will need to keep track of the values $\a_1$, $\a_2$, and $L$, and in particular their dependence on $k$.

\begin{lemma}\label{lem:conditions-r-omegas-hold}
For a set $\mathcal H = \{h_1, \dots, h_k\}$ and $\nu_{\mcH}$, $D_{\mcH}$, $\mathcal A$, $A_d$, and $R_d$ defined as above, the conditions ($R$), ($\Omega_1$), and ($\Omega_2(\kappa,L)$) are satisfied with $\a_1 = k +1$, $\a_2 = O(k)$, $\kappa = k$, and $L = k \log\log(3|D_{\mcH}|)$.
\end{lemma}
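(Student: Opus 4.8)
The plan is to check the three hypotheses \((R)\), \((\Omega_1)\), \((\Omega_2(\kappa,L))\) one at a time, in each case extracting the dependence on \(k\) from the elementary bounds \(1 \le \nu_{\mcH}(p) \le \min\{p,k\}\) together with Mertens-type estimates. Condition \((R)\) requires no work: it is exactly the inequality \(|R_d|\le\nu_{\mcH}(d)\) recorded in the setup, valid for squarefree \(d\) since \(R_d = A_d - \tfrac{\nu_{\mcH}(d)}{d}x\) and \(\nu_{\mcH}(d)=\prod_{p\mid d}\nu_{\mcH}(p)\). For \((\Omega_1)\), the lower bound \(\nu_{\mcH}(p)/p\ge0\) is trivial, and for the upper bound we may assume \(\mathcal H\) is admissible, i.e. \(\nu_{\mcH}(p)\le p-1\) for every \(p\) (otherwise \(\mathfrak S(\mcH)=0\) and the left side of Theorem \ref{thm:distributiontail:selbergsievethm} is \(O(k)\), so the asserted bound is trivial); then \(\nu_{\mcH}(p)\le\min\{p-1,k\}\), and a one-line case check — \(p\le k+1\) versus \(p\ge k+2\) — gives \(\min\{p-1,k\}\le p\bigl(1-\tfrac1{k+1}\bigr)\), i.e. \((\Omega_1)\) with \(\a_1=k+1\).

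For \((\Omega_2(\kappa,L))\) with \(\kappa=k\), write \(\nu_{\mcH}(p)=k-\bigl(k-\nu_{\mcH}(p)\bigr)\), where \(0\le k-\nu_{\mcH}(p)\le k\), so that for \(2\le w\le z\),
\[
\sum_{w\le p<z}\frac{\nu_{\mcH}(p)\log p}{p}-k\log\frac zw \;=\; k\Bigl(\sum_{w\le p<z}\frac{\log p}{p}-\log\frac zw\Bigr)-\sum_{w\le p<z}\frac{(k-\nu_{\mcH}(p))\log p}{p}.
\]
By Mertens' theorem (in the explicit Rosser--Schoenfeld form) the first bracket is \(O(1)\) uniformly in \(w,z\), contributing \(O(k)\); since the last sum is \(\ge0\), the upper bound is immediate with \(\a_2=O(k)\). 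For the lower bound it remains to bound the last sum from above. The key observation is that \(k-\nu_{\mcH}(p)\neq0\) forces two elements of \(\mathcal H\) to coincide modulo \(p\), hence \(p\mid D_{\mcH}\); using \(k-\nu_{\mcH}(p)\le k\) this yields
\[
\sum_{w\le p<z}\frac{(k-\nu_{\mcH}(p))\log p}{p}\;\le\; k\sum_{p\mid D_{\mcH}}\frac{\log p}{p}.
\]

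The one genuine estimate is then \(\sum_{p\mid N}\tfrac{\log p}{p}\ll\log\log 3N\) for every \(N\ge1\), which I would prove by splitting the primes \(p\mid N\) at \(\log N\): the part with \(p\le\log N\) is \(\le\log\log N+O(1)\) by Mertens (bounding by the sum over all primes up to \(\log N\)), while in the part with \(p>\log N\) there are at most \(\log N/\log\log N\) primes (their product divides \(N\)) each contributing at most \(\log\log N/\log N\) (monotonicity of \(t\mapsto(\log t)/t\)), so that part is \(O(1)\). Applying this with \(N=|D_{\mcH}|\) shows the left-hand side of \((\Omega_2)\) is \(\ge -k\log\log 3|D_{\mcH}|-O(k)\); and since \(|D_{\mcH}|=\prod_{i<j}|h_i-h_j|\ge\prod_{d=1}^{k-1}d^{\,k-d}\) (the product of the pairwise differences of \(k\) distinct integers is minimised on consecutive integers), \(\log\log 3|D_{\mcH}|\gg\log k\) for large \(k\), so the \(O(k)\) is absorbed and one may take \(L\ll k\log\log 3|D_{\mcH}|\).

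The main difficulty here is bookkeeping rather than depth: the only non-formal inputs are the bound \(\sum_{p\mid N}\tfrac{\log p}{p}\ll\log\log 3N\) and the remark that every deficient prime divides \(D_{\mcH}\), with everything else reducing to Mertens and \(\nu_{\mcH}(p)\le\min\{p,k\}\). The subtlety worth flagging is the admissibility reduction used for \((\Omega_1)\): without \(\nu_{\mcH}(p)\le p-1\) no finite \(\a_1\) can work, so this possibility must be excluded in the hypotheses of Theorem \ref{thm:distributiontail:selbergsievethm} or disposed of as a trivial case, as above.
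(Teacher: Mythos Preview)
Your proof is correct and follows essentially the same route as the paper: both verify $(R)$ from the setup, derive $(\Omega_1)$ from $\nu_{\mcH}(p)\le\min\{p-1,k\}$, and handle $(\Omega_2)$ by writing $\nu_{\mcH}(p)=k-(k-\nu_{\mcH}(p))$, applying Mertens to the $k$-part, and bounding the deficient part via $\{p:\nu_{\mcH}(p)<k\}\subset\{p:p\mid D_{\mcH}\}$. You supply more detail than the paper does---a self-contained proof of $\sum_{p\mid N}\tfrac{\log p}{p}\ll\log\log 3N$, an explicit admissibility reduction for $(\Omega_1)$, and a check that the stray $O(k)$ is dominated by $k\log\log 3|D_{\mcH}|$---but the underlying argument is the same.
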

\begin{proof}
We first see that condition ($R$) is satisfied, since $|R_d| \le \nu_{\mcH}(d)$. We also have $\frac{\nu_{\mcH}(p)}{p} \le \frac{\min\{k,p-1\}}{p} \le 1-\frac{1}{k+1},$ so that ($\Omega_1$) is satisfied with $\a_1 = k+1$.

For any $z$ and any $w < z$, 
\[\sum_{w \le p < z} \frac{\nu_{\mcH}(p)\log p}{p} = k \sum_{w \le p < z} \frac{\log p}{p} - \sum_{w \le p < z} \frac{k-\nu_{\mcH}(p)}{p} \log p,\]
so that
\[\sum_{w \le p < z} \frac{\nu_{\mcH}(p) \log p}{p} \le k \sum_{w \le p < z} \frac{\log p}{p} \le k \log \frac zw + O(k).\]
By \cite[Lemma 5.1]{MR0424730}, for any natural number $n$, $\sum_{p|n} \frac{\log p}{p} \ll \log \log(3n)$, so 
\[\sum_{w \le p < z} \frac{\nu_{\mcH}(p) \log p}{p} \ge k \log \frac zw - O(k) - \sum_{p|D_{\mcH}} \frac{k}{p} \log p \ge k \log \frac zw - O(k) - O(k\log\log(3|D_{\mcH}|)),\]
and thus ($\Omega_2(\kappa,L)$) is satisfied with $\kappa = k$, $\a_2 = O(k)$ and $L = O(k\log\log(3|D_{\mcH}|))$.
\end{proof}

For $d$ squarefree, let $g(d) = \frac{\nu_{\mcH}(d)}{d \prod_{p|d}(1-\nu_{\mcH}(p)/p)}$, let $G(z) = \sum_{d < z} \mu(d)^2g(d),$ and more generally for any $x$ define $G(x,z) = \sum_{\substack{d < x \\ d|P(z)}} \mu(d)^2 g(d)$, so that $G(z) = G(z,z)$. Let $W(z) = \prod_{p<z}\left(1-\frac{\nu_{\mcH}(p)}{p}\right).$ 
\begin{lemma}[After Lemma 5.4 from \cite{MR0424730}]\label{lem:distributiontail:selberg:HR5.4}
Fix a set $\mathcal H = \{h_1, \dots, h_k\}$, and define $\nu_{\mcH}$, $g(d)$ $G(x,z),$ and $W(z)$. Let $L = k\log\log(3|D_{\mcH}|)$, and let $z > 0$ be a number such that for sufficiently large constants $B_L$ and $B_k$, $L \le \frac 1{B_L} \log z$ and $k^2 \le \frac 1{B_k} \log z$. Then
\begin{equation*}
\frac 1{G(z)} = W(z)e^{\gamma k} \Gamma(k+1)\left(1 + O\left(\frac{L + k^4}{\log z}\right)\right).
\end{equation*}
\end{lemma}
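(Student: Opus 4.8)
The plan is to follow the proof of Lemma 5.4 of Halberstam--Richert \cite{MR0424730}, in which the sieve dimension $\kappa$ is a fixed constant, and to carry the dependence on $\kappa = k$ through every estimate. The inputs are the conditions $(\Omega_1)$ and $(\Omega_2(k,L))$ verified in the preceding lemma, with $\a_1 = k+1$, $\a_2 = O(k)$, and $L = O(k\log\log 3|D_{\mcH}|)$, together with the running hypotheses $L \le \frac{1}{B_L}\log z$ and $k^2 \le \frac1{B_k}\log z$, which are exactly what is needed to make the various error terms genuinely small.

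First I would record the Euler product identity
\[
\subsum{d | P(z)} \mu^2(d)\,g(d) = \prod_{p<z}\bigl(1+g(p)\bigr) = \prod_{p<z}\Bigl(1-\frac{\nu(p)}{p}\Bigr)^{-1} = \frac{1}{W(z)},
\]
so the assertion is equivalent to saying that truncating the sum to $d<z$ costs, asymptotically, precisely the factor $e^{\gamma k}\Gamma(k+1)$, i.e. that $G(z) = \frac{1}{W(z)\,e^{\gamma k}\Gamma(k+1)}\bigl(1+O((L+k^3)/\log z)\bigr)$. Then I would set up the two ingredients of the Halberstam--Richert argument. The first is a Mertens-type estimate, deduced from $(\Omega_2(k,L))$ by partial summation: for $2 \le w \le z$,
\[
\prod_{w\le p<z}\Bigl(1-\frac{\nu(p)}{p}\Bigr)^{-1} = \Bigl(\frac{\log z}{\log w}\Bigr)^{k}\Bigl(1+O\Bigl(\frac{L}{\log w}\Bigr)\Bigr),
\qquad
\subsum{w\le p<z} g(p)\log p = k\log\frac zw + O(L+k^2),
\]
where passing from $\nu(p)/p$ to $g(p)=\tfrac{\nu(p)/p}{1-\nu(p)/p}$ costs only $\subsum{p>k}(g(p)-\nu(p)/p)\log p \ll k^2\subsum{p>k}p^{-2}\log p \ll k$, and the primes $p \le k$---where $g(p)$ can be as large as $p-1$---are separated off and bounded crudely via $\subsum{p\le k} g(p)\log p \ll \subsum{p\le k}(p-1)\log p \ll k^2$. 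The second ingredient is the Buchstab-type identity coming from $\log d = \sum_{p|d}\log p$ for squarefree $d$:
\[
G(z)\log z = \subsum{d<z \\ d | P(z)} \mu^2(d)\,g(d)\log\frac zd \;+\; \subsum{p<z} g(p)\log p\,\widetilde G_p(z/p),
\]
where $\widetilde G_p$ denotes the sum defining $G$ with the prime $p$ deleted.

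Feeding the density estimate for $\sum_{p<z} g(p)\log p$ into this identity and iterating, as in \cite{MR0424730}, converts it into a difference-differential inequality for $u \mapsto G(e^u)$ whose solution, pinned down through the Euler product of the first step, is $G(z) = \frac{(\log z)^k}{\Gamma(k+1)}\prod_{p<z}(1+g(p))(1-1/p)^k\,(1+O(\cdots))$; Mertens' theorem $\prod_{p<z}(1-1/p)^{-k}=(e^\gamma\log z)^k(1+O(k/\log z))$ then recasts this in the stated form. Equivalently, one can bypass the recursion by invoking the classical mean-value estimate $\sum_{d\le z}\mu^2(d)g(d) \sim \frac{(\log z)^k}{\Gamma(k+1)}\prod_p(1+g(p))(1-1/p)^k$ valid for a nonnegative multiplicative $g$ with $\sum_{p\le t}g(p)\log p \sim k\log t$ (Selberg--Delange, or a Perron-type contour argument), being careful that its error is uniform in $k$; this route makes the provenance of both $\Gamma(k+1)$ and $e^{\gamma k}$ transparent.

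The real work---and essentially the only place the argument departs from \cite{MR0424730}---is \emph{uniformity in $k$}: checking that every error term fits into $O((L+k^3)/\log z)$. The delicate points are (a) the small primes $p\le k$, whose local factors must be pulled out before any asymptotic is applied, and which, once combined with the $k$-th power appearing in the Mertens estimate and resolved through the recursion, produce the $k^3$ (more precisely a term of size $k^2\log k$, absorbed using $k^2\le\frac1{B_k}\log z$); (b) controlling how the $O(L+k^2)$ error in the density estimate and the $O(L/\log w)$ error in the Mertens estimate accumulate through the iteration, which is where the hypotheses $L,k^2 \ll \log z$ are used; and (c) confirming that the normalizing constant produced by the fixed point of the recursion is exactly $\Gamma(k+1)$ and that the multiplicative Mertens correction is exactly $e^{\gamma k}$. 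No new idea beyond \cite{MR0424730} should be required, only careful bookkeeping; I expect point (a) to be the main obstacle.
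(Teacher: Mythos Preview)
Your plan is correct and matches the paper's: both follow Halberstam--Richert's Lemma~5.4 verbatim while tracking the dependence on $k$ through the constants $\a_1,\a_2,\kappa,L$. One bookkeeping point worth flagging: in the paper the $G(z)$ asymptotic comes out with error $O((L+k^2)/\log z)$ only, and the $k^3$ enters not through the iteration for $G$ but through the separate estimate $W(z)=\mathfrak S(\mcH)\,e^{-\gamma k}(\log z)^{-k}\bigl(1+O((L+k^3)/\log z)\bigr)$, obtained by tracing Lemmas~5.2--5.3 of \cite{MR0424730} with the crude input $1/W(z)\ll e^{O(k^3)}(\log z)^k$ from their~(2.3.12) --- so your point~(a) is really about $W$, not $G$.
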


\begin{proof}
This proof closely follows the proof of \cite[Lemma 5.4]{MR0424730}, so here we simply highlight the differences, which arise only in that here we keep track of the dependence on $k$ in the form of the constants $\a_1, \a_2$, $\kappa$, and $L$. By Lemma \ref{lem:conditions-r-omegas-hold}, conditions ($R$), ($\Omega_1$), and ($\Omega_2(k,L)$) hold.

In \cite{MR0424730}, Halberstam and Richert show that for $0 < z \le x$,
\begin{align*}
\subsum{d < x \\ d|P(z)} g(d) \log d = &\subsum{d < x \\ d|P(z)} g(d) \sum_{p < \min\{x/d,z\}} \frac{\nu_{\mcH}(p)}{p} \log p \\
&+ \subsum{xz^{-2} \le d < x \\ d|P(z)} g(d) \subsum{\sqrt{x/d} \le p < \min\{x/d,z\} \\ p \nmid d} \frac{g(p)\nu_{\mcH}(p)}{p} \log p.
\end{align*}
Halberstam and Richert use ($\Omega_2(k,L)$) to evaluate the first inner sum. For the second inner sum, note that $\frac{\nu_{\mcH}(p)}{p} \log p \le \a_2$ (see \cite[Equation (2.3.8)]{MR0424730}), and by \cite[Equation (2.3.11)]{MR0424730} we have for any $2 \le a \le b$ that
\begin{equation}\label{eq:sum-g-over-primes-bound}
\sum_{a \le p < b} g(p) \le k \log \frac{\log b}{\log a} + \frac{\a_2}{\log a} + \frac{\a_1\a_2}{\log a}\left(k+\frac{\a_2}{\log a}\right) = k \log \frac{\log b}{\log a} + O\left(\frac{k^3}{\log a}\right),
\end{equation}
which implies that
\begin{align*}
\subsum{\sqrt{x/d} \le p < \min\{x/d,z\} \\ p \nmid d} \frac{g(p)\nu_{\mcH}(p)}{p} \log p &\le \a_2 \subsum{\sqrt{x/d} \le p < \min\{x/d,z\} \\ p \nmid d}g(p) \\
&\ll \a_2k + \a_2^2 + \a_1\a_2^2k + \a_1\a_2^3 \ll k^4.
\end{align*}
Combining these estimates, we get
\begin{align*}
\subsum{d < x \\ d|P(z)} g(d) \log d = &\subsum{x/z \le d < x \\ d|P(z)} g(d) \left(k\log \frac xd + O(L)\right) \\
&+ \subsum{d<x/z \\ d|P(z)} g(d) (k \log z + O(L)) + O(k^4G(x,z)) \\
&= k \subsum{d<x \\ d|P(z)} g(d) \log \frac xd - k \subsum{d<x/z \\ d|P(z)} g(d) \log \frac{x/z}{d} + O((L + k^4)G(x,z)).
\end{align*}

This expression is identical to what appears in \cite[Page 149]{MR0424730} in the proof of Lemma 5.4, except that the factor of $L$ in the error term is replaced by a factor of $L + k^4$. The rest of the proof applies to our situation without change, except for replacing factors of $L$ in the error term with $L+k^4$, so that, as in \cite[Equations (3.10) and (3.13)]{MR0424730}, we get
\begin{equation*}
G(z) = \frac 1{\mathfrak S(\mcH)\Gamma(k+1)} (\log z)^k \left(1+ O\left(\frac{L+k^4}{\log z}\right)\right).
\end{equation*}

By following the proof of \cite[Lemma 5.2]{MR0424730} and using \eqref{eq:sum-g-over-primes-bound} in place of \cite[Equation (2.3.4)]{MR0424730}), we get that for some constant $C \in \R$,
\begin{equation}\label{eq:distributiontail:lemma5.2hrbound}
C\frac{L}{\log a} \le \sum_{a \le p < b} \frac{\nu_{\mcH}(p)}{p}-\kappa \sum_{a \le p < b} \frac 1p \le O\left(\frac{k^3}{\log a}\right).
\end{equation}
Note that by ($\Omega_1$) and \cite[Equation (2.3.9)]{MR0424730}, we have $\sum_{a \le p < b} g^2(p) = O(k^4/\log a)$. Thus, by following the proof of \cite[Lemma 5.3]{MR0424730} and using \eqref{eq:distributiontail:lemma5.2hrbound}, we get that
\begin{equation*}
\prod_{p \ge z} \left(1 - \frac{\nu_{\mcH}(p)}{p}\right)^{-1}\left(1-\frac 1p\right)^k = 1 + O\left(\frac{L+k^4}{\log z}\right).
\end{equation*}
This implies that
\begin{equation}\label{eq:distributiontail:Wzestimate}
W(z) = \mathfrak S(\mathcal H) \frac{e^{-\g k}}{(\log z)^k}\left( 1+O\left(\frac{L+k^4}{\log z}\right)\right),
\end{equation}
which corresponds to \cite[Equation (5.2.5)]{MR0424730} and completes the proof.
\end{proof}

We also make use of \cite[Theorem 3.1]{MR0424730}, which we cite without modification.
\begin{theorem}[Theorem 3.1, Halberstam--Richert, \cite{MR0424730}]\label{thm:distributiontai:hrthm3.1}
Using the notation of this section, with $S(\mathcal A; \mathcal P,z)$ defined in \eqref{eq:distributiontail:selberg:SAPzdef} and satisfying ($R$) and ($\Omega_1$), we have
\begin{equation*}
S(\mathcal A;\mathcal P,z) \le \frac{x}{G(z)} + \frac{z^2}{W^3(z)}.
\end{equation*}
\end{theorem}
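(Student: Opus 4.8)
The statement is the classical Selberg $\Lambda^2$ upper bound sieve applied to the specific sieve problem $(\mathcal A,\mathcal P,z)$ set up above, and the plan is to reproduce the standard argument (as in Chapter 3 of \cite{MR0424730}), using only that conditions ($R$) and ($\Omega_1$) supply $|R_d|\le\nu_{\mcH}(d)$ for squarefree $d$ and $\nu_{\mcH}(p)<p$ for all $p<z$ (so that $g(p)$ and $W(z)$ are well defined and positive). Condition $(\Omega_2(\kappa,L))$ does not enter here; it is used only later, in Lemma~\ref{lem:distributiontail:selberg:HR5.4} and its predecessors, to estimate $G(z)$ and $W(z)$.

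\textbf{Step 1 (majorant).} For any real numbers $(\lambda_d)$ supported on squarefree $d<z$ with $d\mid P(z)$ and normalized by $\lambda_1=1$ (set $\lambda_d=0$ otherwise), one has the pointwise bound $\mathbf 1_{(a,P(z))=1}\le\bigl(\sum_{d\mid(a,P(z))}\lambda_d\bigr)^2$, since the left side vanishes unless $(a,P(z))=1$, in which case the only admissible divisor is $d=1$ and the right side equals $\lambda_1^2=1$. Summing over $a\in\mathcal A$, expanding the square, and interchanging summation, an element of $\mathcal A$ is weighted by $\lambda_{d_1}\lambda_{d_2}$ exactly when it is divisible by both $d_1$ and $d_2$, hence by $[d_1,d_2]$, so
\[
S(\mathcal A;\mathcal P,z)\le\sum_{d_1,d_2}\lambda_{d_1}\lambda_{d_2}A_{[d_1,d_2]}
= x\sum_{d_1,d_2}\lambda_{d_1}\lambda_{d_2}\frac{\nu_{\mcH}([d_1,d_2])}{[d_1,d_2]}
+\sum_{d_1,d_2}\lambda_{d_1}\lambda_{d_2}R_{[d_1,d_2]},
\]
using $A_d=\tfrac{\nu_{\mcH}(d)}{d}x+R_d$. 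Denote the quadratic form in the main term by $Q(\lambda)$.

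\textbf{Step 2 (optimize the main term).} Using multiplicativity of $\nu_{\mcH}$ on squarefree arguments and $[d_1,d_2]=d_1d_2/(d_1,d_2)$, Selberg's change of variables from $(\lambda_d)$ to $(y_m)_{m<z,\,m\mid P(z)}$ (with $y_m$ a multiplicative-weighted combination of the $\lambda_d$ with $m\mid d$, inverted by Möbius) diagonalizes the form as $Q(\lambda)=\sum_{m<z,\,m\mid P(z)}y_m^2/g(m)$, while the constraint $\lambda_1=1$ becomes $\sum_m y_m=1$. By Cauchy--Schwarz, $1=\bigl(\sum_m y_m\bigr)^2\le\bigl(\sum_m y_m^2/g(m)\bigr)\bigl(\sum_m g(m)\bigr)=Q(\lambda)\,G(z)$, so $Q(\lambda)\ge 1/G(z)$ with equality for $y_m=g(m)/G(z)$; unwinding gives an explicit optimal $\lambda_d$ for which $\mu(d)\lambda_d$ equals the ratio $G_d(z/d)/G(z)$, where $G_d(w):=\sum_{m<w,\,(m,d)=1,\,m\mid P(z)}\mu^2(m)g(m)$. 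Since $g\ge0$, the numerator is a sub-sum of the denominator, so $0\le\mu(d)\lambda_d\le1$, i.e. $|\lambda_d|\le1$ --- the standard Selberg bound. With this choice the main term equals $x/G(z)$.

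\textbf{Step 3 (bound the remainder).} With $|\lambda_d|\le1$, $|R_{[d_1,d_2]}|\le\nu_{\mcH}([d_1,d_2])$ from ($R$), the fact that the number of ordered pairs $(d_1,d_2)$ with $[d_1,d_2]=d$ is $3^{\omega(d)}$, and $[d_1,d_2]<z^2$ whenever $d_1,d_2<z$,
\[
\Bigl|\sum_{d_1,d_2}\lambda_{d_1}\lambda_{d_2}R_{[d_1,d_2]}\Bigr|
\le\sum_{\substack{d<z^2\\ d\mid P(z)}}3^{\omega(d)}\nu_{\mcH}(d)
\le z^2\prod_{p<z}\Bigl(1+\frac{3\nu_{\mcH}(p)}{p}\Bigr)
\le z^2\prod_{p<z}\Bigl(1-\frac{\nu_{\mcH}(p)}{p}\Bigr)^{-3}=\frac{z^2}{W^3(z)},
\]
where the second inequality replaces $3^{\omega(d)}\nu_{\mcH}(d)$ by $z^2\cdot\tfrac{3^{\omega(d)}\nu_{\mcH}(d)}{d}$ (valid since $d<z^2$) and then Euler-factors the resulting sum over $d\mid P(z)$, and the last inequality uses $1+3t\le(1-t)^{-3}$ for $0\le t<1$. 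Combining Steps~1--3 gives $S(\mathcal A;\mathcal P,z)\le x/G(z)+z^2/W^3(z)$.

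The main obstacle is Step~2: verifying that the Selberg quadratic form diagonalizes cleanly and that the optimizer satisfies $|\lambda_d|\le1$. This is the technical heart of the argument, but it is entirely classical and carries over verbatim from \cite{MR0424730} because ($R$) and ($\Omega_1$) hold (with the now $k$-dependent constants $\alpha_1=k+1$, etc.); no estimate here depends on $k$ being bounded, so nothing in the proof needs modification beyond noting that $G(z)$ and $W(z)$ are the quantities attached to this $\nu_{\mcH}$.
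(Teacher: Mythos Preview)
Your proof is correct and reproduces the classical Selberg $\Lambda^2$ argument from Chapter~3 of \cite{MR0424730}. Note, however, that the paper does not give its own proof of this statement: it is explicitly cited ``without modification'' from Halberstam--Richert, so there is nothing in the paper to compare your argument against. Your write-up is essentially the Halberstam--Richert proof itself, and the observation in your final paragraph---that nothing in this particular inequality depends on $k$ being bounded---is exactly why the paper can quote it verbatim.
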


We are now ready to prove Theorem \ref{thm:distributiontail:selbergsievethm}, following the proof of \cite[Theorem 5.7]{MR0424730}. Fix $z < x$ to be chosen later; we estimate $S(\mathcal A;\mathcal P,z)$, which is an upper bound for our desired quantity. By Theorem \ref{thm:distributiontai:hrthm3.1}, 
\begin{equation*}
S(\mathcal A;\mathcal P,z) \le \frac{x}{G(z)} + \frac{z^2}{W^3(z)}.
\end{equation*}
Applying Lemma \ref{lem:distributiontail:selberg:HR5.4} yields
\begin{equation*}
S(\mathcal A;\mathcal P,z) \le xW(z)e^{\g k}\Gamma(k+1)\left(1+O\left(\frac{L+k^4}{\log z}\right)\right) + x\frac{z^2}{xW^3(z)}.
\end{equation*}
By equation (2.3.12) in \cite{MR0424730},
\begin{equation*}
\frac 1{W(z)} \ll e^{O(k^3)}(\log z)^k,
\end{equation*}
which implies that
\begin{equation*}
\frac{z^2}{W^3(z)} = xW(z)\frac{z^2}{xW^4(z)} = xW(z)O\left(\frac{z^2(\log z)^{4k}e^{O(k^3)}}{x}\right),
\end{equation*}
and thus
\begin{equation*}
S(\mathcal A;\mathcal P,z) \le xW(z)\left(e^{\g k}\Gamma(k+1)\left(1+O\left(\frac{L+k^4}{\log z}\right)\right) + O\left(\frac{z^2(\log z)^{4k}e^{O(k^3)}}{x}\right)\right).
\end{equation*}

Plugging in \eqref{eq:distributiontail:Wzestimate}, we get
\begin{equation*}
S(\mathcal A;\mathcal P,z) \le \Gamma(k+1) \mathfrak S(\mathcal H) \frac{x}{(\log z)^k} \left(1 +O\left(\frac{L+k^4}{\log z}\right) + O\left(\frac{z^2(\log z)^{4k}e^{O(k^3)}}{x}\right)\right).
\end{equation*}
Set $z = x^{1/(2 +\ep)}$ to complete the proof, keeping in mind that $k = o((\log x)^{1/4})$. 

\bibliographystyle{amsplain}
\bibliography{singseries}

\end{document}